\newcommand{\rrvert}{\vert}
\newcommand{\llvert}{\vert}
\newtheorem{theorem}{Theorem}[section]
\newtheorem{lemma}[theorem]{Lemma}
\begin{document}
\begin{frontmatter}

\title{Asymptotic distribution of the maximum interpoint
distance in a sample of random
vectors with a~spherically symmetric distribution}
\runtitle{Maximum interpoint distance}

\begin{aug}
\author[A]{\fnms{Sreenivasa Rao}~\snm{Jammalamadaka}\ead[label=e1]{rao@pstat.ucsb.edu}\ead[label=u1,url]{http://www.pstat.ucsb.edu/faculty/jammalam/}}
\and
\author[B]{\fnms{Svante}~\snm{Janson}\corref{}\thanksref{T1}\ead[label=e2]{svante.janson@math.uu.se}\ead[label=u2,url]{http://www2.math.uu.se/\textasciitilde svante/}}
\runauthor{S.~R. Jammalamadaka and S. Janson}
\thankstext{T1}{Supported in part by the Knut and Alice Wallenberg Foundation.}
\affiliation{University of California and Uppsala University}
\address[A]{Department of Statistics\\
\quad and Applied Probability\\
University of California\\
Santa Barbara, California 93106\\
USA\\
\printead{e1}\\
\printead{u1}}
\address[B]{Department of Mathematics\\
Uppsala University\\
PO Box 480\\
SE-751~06 Uppsala\\
Sweden\\
\printead{e2}\\
\printead{u2}}
\end{aug}

%
\received{\smonth{12} \syear{2012}}
%
\revised{\smonth{11} \syear{2014}}

%
\begin{abstract}
Extreme value theory is part and parcel of any study of order
statistics in
one dimension. Our aim here is to consider such large sample theory for the
maximum distance to the origin, and the related maximum ``interpoint
distance,'' in multidimensions. We show that for a family of spherically
symmetric distributions, these statistics have a Gumbel-type limit,
generalizing several existing results. We also discuss the other two types
of limit laws and suggest some open problems. This work complements
our earlier study on the minimum interpoint distance.
\end{abstract}

%
\begin{keyword}[class=AMS]
\kwd{60D05}
\kwd{60F05}
\kwd{60G70}
\kwd{62E20}
\end{keyword}
\begin{keyword}
\kwd{Maximum interpoint distance}
\kwd{extreme value distributions}
\kwd{Gumbel distribution}
\end{keyword}
\end{frontmatter}

\section{Introduction and main results}\label{S:intro}

Let $X_1,X_2,\ldots$ be an independently and identically distributed
(i.i.d.)
sequence of random vectors in $\mathbb R^d$
with a spherically symmetric distribution, where $d\ge2$. (See
Section~\ref{SSd=1}
for some comments on the case $d=1$; otherwise $d\ge2$ will always be assumed.)
We are interested in the maximum interpoint distance
%
\begin{equation}
\label{MM} M^{(2)}_n:=\max_{1\le i< j\le n}|X_i-X_j|,
\end{equation}
where $|\cdot|$ is the usual Euclidean distance.
This has previously been studied by several authors in various special
cases, including
Matthews and Rukhin \cite{MR93} (symmetric normal distribution),
Henze and Klein \cite{HK96} (Kotz distribution),
Appel, Najim and Russo \cite{ANR02} (uniform distribution in a ball),
Appel and Russo \cite{AR06} (uniform distribution on a sphere) and
Mayer and Molchanov \cite{MM07} (e.g., uniform distribution in a ball
or on a sphere).
We provide some general results here
for the case of unbounded random vectors from any spherically symmetric
distribution, which includes the work in \cite{MR93} and \cite{HK96}.

The results for maximum distance can be considered as complementary to
the results for the
minimum interpoint distance; see, for example, Jammalamadaka and Janson
\cite{JJ}.
One important difference is that the minimum distance is typically
achieved by points in the bulk of the distribution, while the maximum
distance is attained by outliers.
This makes the maximum distance less useful for goodness of fit tests, but
might be suitable for detecting outliers. Some
applications are given in Matthews and Rukhin \cite{MR93}.

The maximum pairwise distance $M^{(2)}_n$ is clearly related to the maximum
distance to the origin
%
\begin{equation}
\label{M} M_n:=\max_{1\le i\le n}|X_i|.
\end{equation}
We obviously have $M^{(2)}_n\le2M_n$, and it seems
reasonable to guess that this bound is rather sharp; this would mean that
the maximum distance (\ref{MM}) is attained by two vectors $X_i$ and $X_j$
that have almost maximum lengths and are almost opposite in direction.


For the case $d=1$, it is well known (see, e.g., Leadbetter, Lindgren
and Rootz\'en~\cite{LLR}),
that the asymptotic distribution of $M_n$ after suitable
normalization, may be of one of the three different types (assuming
that the tail of
the distribution of $|X_i|$
is so regular that there is an asymptotic distribution at all). The three
types of limit distributions, called extreme value distributions,
are known as Gumbel, Weibull and Fr\'echet distributions;
they have the distribution functions
%
\begin{eqnarray}
&&\exp \bigl(-e^{-x} \bigr), \qquad -\infty<x<\infty \mbox{ (Gumbel)},\label{gumbel}
\\
&&\exp \bigl(-|x|^\alpha \bigr),\qquad -\infty<x\le0  \mbox{ (Weibull)},
\label{weibull}
\\
&&\exp \bigl(-x^{-\alpha} \bigr),\qquad 0<x<\infty \mbox{ (Fr\'echet)},\label
{frechet}
\end{eqnarray}
where for the last two cases $\alpha$ is a positive parameter.

For the multidimensional situation, we shall focus here mostly on the
Gumbel limit which includes, for example, the important
case of samples from a normal
distribution; we show that under some regularity conditions
$M^{(2)}_n$ in multidimensions also has an asymptotic Gumbel distribution.
The Weibull case (including, e.g., the uniform distribution in a ball)
was considered in \cite{MM07};
in this case the asymptotic distribution of $M^{(2)}_n$ also turns out
to be
Weibull, although with a different parameter.
We have not much to add to their results except to make
a few comments in Section~\ref{SSweibull}.
The Fr\'echet case (e.g., power law tails) is more complicated; there
is a limit distribution for $M^{(2)}_n$ in this case too, but it is
not known explicitly. We explain this difference in Section~\ref{SSfrechet}.
\subsection{Notation}
All unspecified limits are as ${n\to\infty}$. In particular, $x_n\sim
y_n$ means
$x_n/y_n\to1$ as ${n\to\infty}$ (allowing also $x_n=y_n=0$ for some $n$).
Convergence in probability or distribution is denoted by $\stackrel
{{p}}{\longrightarrow}$ and
$\stackrel{{d}}{\longrightarrow}$,
respectively.
We let $x_+:=\max(x,0)$ for $x\in\mathbb R$.

\subsection{Main results}
Our main result is contained in the following theorem, whose proof is
given in Section~\ref{Spf}. We also provide two
special versions of this main result (Theorem~\ref{TC1} and
Theorem~\ref{TC2})
which readily connect to useful applications.

\begin{theorem}\label{T1}
Suppose that $d\ge2$ and that
$X,X_1,X_2,\ldots$ are i.i.d. $\mathbb R^d$-valued random vectors
with a spherically symmetric distribution
such that for some sequences $a_n$ and $b_n$ of positive numbers
with $b_n=o(a_n)$,
%
\begin{equation}
\label{a} \mathbb{P} \bigl(|X|>a_n+tb_n \bigr) =
\frac{1+o(1)}ne^{-t}
\end{equation}
as ${n\to\infty}$,
for all $t=t_n$ with
$|t|\le\frac{d-1}2\log(a_n/b_n)$.
Let
%
\begin{equation}
\label{cd} c_d:=(d-1)2^{d-4}\Gamma(d/2)/\sqrt\pi.
\end{equation}
Then
%
\begin{equation}
\label{t1a} \frac{M^{(2)}_n-2a_n}{b_n} +\frac{d-1}2\log\frac{a_n}{b_n}-\log\log
\frac{a_n}{b_n}-\log c_d \stackrel{{d}} {\longrightarrow}V,
\end{equation}
where $V$ has the Gumbel distribution $\mathbb{P}(V\le x)=e^{-e^{-x}}$.
\end{theorem}

\begin{remark}\label{Rmax}
In particular, since $\log(a_n/b_n)\to\infty$,
we assume that (\ref{a}) holds for every fixed $t$. This is, by a standard
argument (see, e.g., \cite{LLR}),
equivalent to
%
\begin{equation}
\label{max1} \mathbb{P} \bigl((M_n -a_n)
/b_n\le t \bigr) \to e^{-e^{-t}},
\end{equation}
that is,
%
\begin{equation}
\label{max2} \frac{M_n -a_n}{b_n}\stackrel{{d}} {\longrightarrow}V,
\end{equation}
where $V$ has the Gumbel distribution. (This verifies our claim that we are
dealing with the Gumbel case.)
Conversely, if (\ref{max2}) holds, so (\ref{a}) holds for every fixed~$t$,
then necessarily $b_n=o(a_n)$ (as is easily seen by considering large
negative $t$). Thus the assumption $b_n=o(a_n)$ is redundant if we add the
requirement that (\ref{a}) holds for any fixed $t$.

We also note that our assumption is a bit stronger than just assuming
(\ref{max2}), since we require (\ref{a}) also for some $t=t_n\to
\infty$.
First, this restricts the choice of~$b_n$. Indeed, in (\ref{max2}), $b_n$
can be replaced by any $b_n'=b_n(1+o(1))$, but for assumption
(\ref{a}) for our range of $t$ one needs $b_n'=b_n(1+o(1/\log(a_n/b_n)))$.
Actually, the latter condition is also needed for replacing $b_n$ by $b_n'$
in the conclusion (\ref{t1a}), giving some justification to our condition.
Second, condition (\ref{a}) for our range of $t$ is satisfied for a
suitable choice of $a_n$ and $b_n$ in
sufficiently regular instances of (\ref{max2}), such as
the examples in Section~\ref{Sex}, but it does not always hold.
A counterexample is given by $\mathbb{P}(|X|>x)=\exp (-\int_0^x
h(t)\,\mathrm{d}t )$
with a function $h(t)>0$ such that $h(t)\to1$ as $t\to\infty$; this always
satisfies (\ref{a}) for fixed $t$
(with $b_n\sim1$ and some $a_n\sim\log n$),
and thus (\ref{max2}),
but for a suitably slowly oscillating
$h$, for example, $h(x)=1+\sin(x/\log x)/\log\log x$, (\ref{a}) does
not hold
for all $t$ with $|t|\le\frac{1}2\log(a_n/b_n)$,\vadjust{\goodbreak} for any such $a_n$
and $b_n$.
We expect that it is possible to extend Theorem~\ref{T1} to such
cases, with some
modification of (\ref{t1a}), but we have not pursued this and leave it
as an
open problem.
\end{remark}

\begin{remark}\label{Rdiff}
As a corollary we see that typically $2M_n-M^{(2)}_n$ is about
$\frac{d-1}2 b_n \log(a_n/b_n)$; more precisely,
(\ref{t1a}) and (\ref{max2}) imply
%
\begin{equation}
\frac{2M_n-M^{(2)}_n}{b_n\log(a_n/b_n)}\stackrel{ {p}} {\longrightarrow}\frac{d-1}2.
\end{equation}
It can be seen from the proof below that if we order $X_1,\ldots,X_n$ as
$X_{(1)},\ldots,\break X_{(n)}$ with $M_n=|X_{(1)}|\ge\cdots\ge|X_{(n)}|$,
then the probability that $M^{(2)}_n$ is attained by a pair including $X_{(1)}$
tends to 0; the reason is that the other
large vectors $X_{(2)},\ldots$ probably are not almost opposite to $X_{(1)}$.
However, if we consider points $X_i$ such that $|X_i|$ is close
to $M_n$, with a suitable margin, then
there will be many such points, and it is likely that some pair will be
almost opposite. There is a trade-off between what we lose in length and
what we gain in angle, and the proof of the theorem is based on finding
the right balance.
\end{remark}

We now give two special versions of the main result that are more
conveniently stated, and are most
likely to be useful in applications. The proofs of these two theorems are
given in Section~\ref{Spf2}.

\begin{theorem}\label{TC1}
Suppose that $d\ge2$ and
that $X,X_1,X_2,\ldots$ are i.i.d. $\mathbb R^d$-valued random vectors
with a spherically symmetric distribution
such that
%
\begin{equation}
\label{g} \mathbb{P}\bigl(|X|>x\bigr)=G(x)=e^{-g(x)+o(1)} \qquad\mbox{as } x\to\infty,
\end{equation}
for some twice differentiable function $g(x)$ such that, as ${x\to
\infty}$,
%
\begin{eqnarray}
xg'(x)&\to&\infty, \label{g'}
\\
\frac{g''(x)}{g'(x)^2}\log^2 \bigl(xg'(x) \bigr)&\to&0,
\label{g''}
\end{eqnarray}
and that $a_n$ and $b_n$ are such that, as ${n\to\infty}$, $a_n\to
\infty$ and
%
\begin{eqnarray}
g(a_n)&=&\log n+o(1),\label{an}
\\
b_n&=&\frac{1+o (1/\log(a_ng'(a_n)) )}{g'(a_n)}.\label{bn}
\end{eqnarray}
Then 
(\ref{t1a}) holds.
\end{theorem}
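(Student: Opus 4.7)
The plan is to derive Theorem \ref{TC1} from Theorem \ref{T1} by showing that the regularity assumptions \eqref{g}--\eqref{bn} on $g$, $a_n$, and $b_n$ force the uniform tail expansion \eqref{a}. Once \eqref{a} is verified, together with $b_n = o(a_n)$ (which follows since $b_n \sim 1/g'(a_n)$ and $a_n g'(a_n) \to \infty$ by \eqref{g'}), \refT{T1} delivers \eqref{t1} and \eqref{t1a} immediately.

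Using \eqref{g}, the assertion $\P(|X|>a_n+tb_n) = \etton e^{-t}$ is equivalent to
\[
g(a_n+tb_n) = \log n + t + o(1)
\]
uniformly for $|t| \le \frac{d-1}{2}\log(a_n/b_n)$. I would establish this by a second-order Taylor expansion at $a_n$:
\[
g(a_n+tb_n) = g(a_n) + tb_n\,g'(a_n) + \tfrac12 t^2 b_n^2\, g''(\xi_{n,t}),
\]
for some $\xi_{n,t}$ between $a_n$ and $a_n+tb_n$. The linear part is handled by \eqref{an} and \eqref{bn}: these give $g(a_n) = \log n + o(1)$ and $b_n g'(a_n) = 1 + o(1/\log(a_n g'(a_n)))$, so the linear contribution equals $\log n + t + |t|\cdot o(1/\log(a_n g'(a_n))) + o(1)$. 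Since $\log(a_n/b_n) = \log(a_n g'(a_n)) + o(1)$ by \eqref{bn}, and $|t| \le \frac{d-1}{2}\log(a_n/b_n)$, the error term $|t|\cdot o(1/\log(a_n g'(a_n)))$ is $o(1)$ uniformly. This is exactly why the tight control \eqref{bn} on $b_n$ is formulated with that particular $o$-term.

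The main obstacle is controlling the quadratic remainder $\tfrac12 t^2 b_n^2\, g''(\xi_{n,t})$ uniformly. From $|t|b_n \le \frac{d-1}{2} b_n\log(a_n/b_n) \sim \frac{d-1}{2}\log(a_n g'(a_n))/g'(a_n) = o(a_n)$ (using $\log x/x \to 0$ and \eqref{g'}), one sees that $\xi_{n,t} = a_n(1+o(1))$ uniformly, so $\xi_{n,t}\to\infty$. I would then use \eqref{g''} applied at $\xi_{n,t}$: first invoke the slowness of $g'$ implied jointly by \eqref{g'} and \eqref{g''} (via $\log g'(\xi)-\log g'(a_n) = \int_{a_n}^\xi g''/g' \,\mathrm ds$, whose magnitude is $o(1)$ in the relevant range) to replace $g'(\xi_{n,t})$ and $\xi_{n,t} g'(\xi_{n,t})$ by $g'(a_n)$ and $a_n g'(a_n)$ up to a factor $1+o(1)$; this converts \eqref{g''} into $g''(\xi_{n,t}) = o\bigl(g'(a_n)^2/\log^2(a_n g'(a_n))\bigr)$. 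Combining with $t^2 b_n^2 = O\bigl(\log^2(a_n g'(a_n))/g'(a_n)^2\bigr)$ yields that the remainder is $o(1)$ uniformly over the stated $t$-range.

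Finally, the $o(1)$ error in \eqref{g} is harmless since $a_n + tb_n \to \infty$ uniformly. Combining the above, \eqref{a} holds with the required uniformity, and an appeal to \refT{T1} finishes the proof. The only genuinely delicate point is the transfer of the second-derivative control from $\xi_{n,t}$ back to $a_n$; everything else is bookkeeping, calibrated to make the $o(1/\log(a_n g'(a_n)))$ tolerance in \eqref{bn} exactly absorb the maximal allowed growth of $|t|$.
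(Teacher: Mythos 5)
Your overall plan is the same as the paper's: show that the hypotheses force the uniform tail expansion \eqref{a} (with the required uniformity over $|t|\le\frac{d-1}2\log(a_n/b_n)$), then invoke \refT{T1}.  Your bookkeeping for the linear term, including the observation that the $o(1/\log(a_ng'(a_n)))$ tolerance in \eqref{bn} exactly absorbs the maximal growth of $|t|$, is correct and matches the paper.  However, the argument has a genuine gap at precisely the point you flag as ``the only genuinely delicate point.''

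The step where you claim ``$\log g'(\xi_{n,t})-\log g'(a_n)=\int_{a_n}^{\xi_{n,t}}g''/g'\,\ddx s$, whose magnitude is $o(1)$ in the relevant range'' is circular as written.  To bound $\int g''/g'$, you must write $g''/g'=(g''/g'^2)\,g'$ and use \eqref{g''} to control $g''/g'^2$; but \eqref{g''} gives a bound in terms of $\log^{-2}\!\bigpar{sg'(s)}$, and to convert that into a bound in terms of $\log^{-2}\!\bigpar{a_ng'(a_n)}$ and then integrate, you already need to know that $g'(s)\asymp g'(a_n)$ on the interval --- which is exactly what you are trying to prove.  The paper breaks this circularity with a bootstrap/contradiction argument: define $\gd(x)=A\log(xg'(x))/g'(x)$ and $I_x=[x-\gd(x),x+\gd(x)]$, suppose the two-sided comparison $\frac12 g'(x)<g'(y)<2g'(x)$ first fails at some $y\in I_x$, apply the mean value theorem to $1/g'$ on the interval from $x$ to $y$ (where the comparison \emph{does} hold), and derive a contradiction with \eqref{g'} for $x$ large.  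Once that comparison is in hand, $\sup_{y\in I_x}|g''(y)|\,\gd(x)^2\to0$ follows directly from \eqref{g''}, and the Taylor remainder is $o(1)$ uniformly.  Your proposal asserts the conclusion of this bootstrap without providing a non-circular proof of it, so as written the verification of \eqref{a} is incomplete.  Filling the gap requires something like the paper's contradiction argument (or an equivalent continuity/first-failure argument); a bare appeal to \eqref{g'} and \eqref{g''} does not suffice.
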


Note that Remark~\ref{Rmax} gives an example of a distribution in the Gumbel
domain of attraction such that (\ref{g}) and (\ref{g'}) hold, but not the
more technical assumption~(\ref{g''}).

\begin{theorem}
\label{TC2}
Suppose that $d\ge2$ and
that $X_1,X_2,\ldots$ are i.i.d. $\mathbb R^d$-valued random vectors
with a spherically symmetric distribution with a density function
$f(\mathbf{x})$ such
that, as $|\mathbf{x}|\to\infty$,
%
\begin{equation}
\label{f} f(\mathbf{x})\sim c |\mathbf{x}|^\alpha e^{-\beta|\mathbf
{x}|^\gamma}
\end{equation}
for some $c,\beta,\gamma>0$ and $\alpha\in\mathbb R$.
Then
\begin{eqnarray*}
&&\bigl(\beta^{1/\gamma}\gamma\log^{1-1/\gamma}n \bigr)\cdot
M^{(2)}_n - \biggl( 2\gamma\log n + \biggl(2
\frac{\alpha+d}{\gamma}-\frac{d+3}2 \biggr)\log\log n
\\
&&\hspace*{123pt}\quad{}+\log\log\log n +\log \bigl(c'_d\beta^{-2(\alpha+d)/\gamma}
\gamma ^{-(d+3)/2}c^2 \bigr) \biggr)\\
&&\qquad \stackrel{{d}} {
\longrightarrow}V,
\end{eqnarray*}
where
%
\begin{equation}
\label{cct} c'_d= \frac{(d-1)2^{d-2}\pi^{d-1/2}}{\Gamma(d/2)},
\end{equation}
and $V$ has the Gumbel distribution. 
\end{theorem}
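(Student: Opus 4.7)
The plan is to derive \refT{TC2} as a consequence of \refT{TC1} applied to the radial tail $G(x):=\P(|X|>x)$. By spherical symmetry and the assumption \eqref{f},
\begin{equation*}
G(x) = \omega_{d-1}\int_x^\infty r^{d-1}\tilde f(r)\dd r\sim c\,\omega_{d-1}\int_x^\infty r^{d-1+\ga}e^{-\gb r^\gam}\dd r,
\end{equation*}
where $\omega_{d-1}=2\pi^{d/2}/\gG(d/2)$ and $\tilde f$ denotes $f$ as a function of $|\xx|$. Watson's lemma (equivalently, integration by parts in $u=\gb r^\gam$) yields $G(x)\sim C_*\,x^{d+\ga-\gam}e^{-\gb x^\gam}$ with $C_*:=c\omega_{d-1}/(\gb\gam)$, so \eqref{g} holds with
\begin{equation*}
g(x):=\gb x^\gam-(d+\ga-\gam)\log x-\log C_*.
\end{equation*}
Conditions \eqref{g'} and \eqref{g''} are straightforward: $xg'(x)\sim\gb\gam x^\gam\to\infty$, and $g''(x)/g'(x)^2=O(x^{-\gam})$ beats $\log^2(xg'(x))\sim\gam^2\log^2 x$.

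Next I invert \eqref{an}: to leading order $a_n\sim\gb^{-1/\gam}(\log n)^{1/\gam}$, and one iteration gives
\begin{equation*}
a_n = \gb^{-1/\gam}(\log n)^{1/\gam}\Bigl[1+\tfrac{(d+\ga-\gam)\log a_n+\log C_*}{\gam\log n}+o(1/\log n)\Bigr], \qquad \log a_n = \tfrac{1}{\gam}\log\log n - \tfrac{1}{\gam}\log\gb + o(1),
\end{equation*}
and from \eqref{bn}, $b_n\sim(\gam\gb^{1/\gam})^{-1}(\log n)^{1/\gam-1}$. Hence the scaling factor $K:=\gb^{1/\gam}\gam(\log n)^{1-1/\gam}$ appearing in \refT{TC2} satisfies $b_n K\to 1$. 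Multiplying \eqref{t1a} by $b_nK$ converts $\MM_n/b_n$ into $K\MM_n$ while preserving the Gumbel limit up to an $o(1)$ error (one uses $b_nK-1=O(\log\log n/\log n)$ to absorb the $O(\log\log n)$ centering contribution $(b_nK-1)\cdot[\tfrac{d-1}{2}\log(a_n/b_n)-\log\log(a_n/b_n)-\log\cco]$).

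The final step is to expand the centering
$K\cdot 2a_n - \tfrac{d-1}{2}\log(a_n/b_n)+\log\log(a_n/b_n)+\log\cco$ to error $o(1)$. From $a_n/b_n\sim\gam\log n$ one obtains $\log(a_n/b_n)=\log\log n+\log\gam+o(1)$ and $\log\log(a_n/b_n)=\log\log\log n+o(1)$, while the expansion of $a_n$ above yields
\begin{equation*}
K\cdot 2a_n = 2\gam\log n + \tfrac{2(d+\ga-\gam)}{\gam}\log\log n - \tfrac{2(\ga+d)}{\gam}\log\gb + 2\log c + 2\log\omega_{d-1} - 2\log\gam + o(1).
\end{equation*}
Collecting the $\log\log n$ terms gives coefficient $\tfrac{2(\ga+d)}{\gam}-\tfrac{d+3}{2}$, and collecting constants produces $\log\bigl(\cco\,\omega_{d-1}^2\, c^2\,\gb^{-2(\ga+d)/\gam}\,\gam^{-(d+3)/2}\bigr)$. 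From \eqref{cd} and \eqref{cct} one checks directly that $\cco\,\omega_{d-1}^2=\cct$, matching the constant stated in \refT{TC2}. The main obstacle is purely the bookkeeping: the expansion of $a_n$ must be accurate to $o(b_n)$ so that multiplication by $K$ loses no $O(1)$ terms, and the multiplicative constants from $C_*$, from the $\log(a_n/b_n)$ expansion, and from $\cco$ must be gathered carefully; the identity $\cco\,\omega_{d-1}^2=\cct$ then serves as a verification that the accounting is correct.
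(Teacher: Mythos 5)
Your proposal follows the same route as the paper's proof: compute the radial tail $G(x)$ via the asymptotic evaluation of $\int_x^\infty r^{d-1+\alpha}e^{-\beta r^\gamma}\,dr$ (the paper proves this as Lemma~\ref{Li} rather than quoting Watson's lemma), identify $g$, verify \eqref{g'}--\eqref{g''}, invert \eqref{an} to get the expansion of $a_n$, and collect terms to match the constants, including the identity $\cco\,\omega_{d-1}^2=\cct$. The only cosmetic difference is that the paper fixes $b_n:=\beta^{-1/\gamma}\gamma^{-1}\log^{1/\gamma-1}n$ so that $b_nK=1$ exactly (then checks \eqref{bn}), avoiding the small extra argument you make about $(b_nK-1)$ being small enough to absorb the centering error; your bound $b_nK-1=O(\log\log n/\log n)$ is not quite right for $\gamma>1$, but the correct estimate $O(\log\log n\cdot\log^{-1/\gamma}n)$ still suffices, so this is immaterial.
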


We give some specific examples in Section~\ref{Sex}, and provide further
comments as well as state some open
problems in Section~\ref{Sfurther}.

\section{Examples}\label{Sex}

\begin{example}
Suppose that $X_i$ has a standard multivariate normal distribution in $R^d$.
The density function is
%
\begin{equation}
f(\mathbf{x})=(2\pi)^{-d/2} e^{-|\mathbf{x}|^2/2},
\end{equation}
which satisfies (\ref{f}) with
$c=(2\pi)^{-d/2}$, $\alpha=0$, $\beta=1/2$ and $\gamma=2$.
Hence Theorem~\ref{TC2} yields, for $d\ge2$,
\begin{eqnarray*}
&&\sqrt{2\log n} M^{(2)}_n - \biggl( 4\log n +
\frac{d-3}{2}\log\log n +\log\log\log n +\log{\frac{(d-1)2^{(d-7)/2}}{\sqrt\pi\Gamma(d/2)}} \biggr)
\\
&&\qquad\stackrel{{d}} {\longrightarrow}V.
\end{eqnarray*}
This was shown by Matthews and Rukhin \cite{MR93} (with a correction
by Henze and Klein \cite{HK96}).
\end{example}

\begin{example}
Henze and Klein \cite{HK96} considered, more generally,
the case when
$X_i$ has a symmetric Kotz-type distribution in $\mathbb R^d$, $d\ge2$,
with density
%
\begin{equation}
f(\mathbf{x})=\frac{\kappa^{d/2+b-1}\Gamma(d/2)}{\pi^{d/2}\Gamma(d/2+b-1)} |\mathbf{x}|^{2(b-1)} e^{-\kappa|\mathbf{x}|^2},
\end{equation}
where $b\in\mathbb R$ and $\kappa>0$.
Theorem~\ref{TC2} applies with
$c=\frac{\kappa^{d/2+b-1}\Gamma(d/2)}{\pi^{d/2}\Gamma(d/2+b-1)}$,
$\alpha=2(b-1)$, $\beta=\kappa$ and $\gamma=2$, and yields
\begin{eqnarray*}
&&{\sqrt{4\kappa\log n}} M^{(2)}_n - \biggl( 4\log n +{
\frac{4b+d-7}2}\log\log n +\log\log\log n
\\
&&\hspace*{128pt}\qquad{}+\log{\frac{(d-1)2^{(d-7)/2}\Gamma(d/2)}{\sqrt\pi\Gamma(d/2+b-1)^2}} \biggr) 
\\
&&\qquad
\stackrel{{d}} {
\longrightarrow}V, 
\end{eqnarray*}
as shown by \cite{HK96}.

The case $\gamma=1$ of Theorem~\ref{TC2} yields a similar result for
a density
$f(\mathbf{x})= c |\mathbf{x}|^\alpha e^{-\beta|\mathbf{x}|}$.
\end{example}


\begin{example}\label{EGumbelsphere}
Suppose that the points $X_i$ are symmetrically
distributed in the unit sphere with
%
\begin{equation}
\mathbb{P}\bigl(|X|>x\bigr)=e^{-x/(1-x)}= e\cdot e^{-1/(1-x)},\qquad 0\le x<1.
\end{equation}
It is easily verified that (\ref{a}) holds for $t=O(\log\log n)$ with
$a_n=1-\log^{-1}n+\log^{-2}n$ and $b_n=\log^{-2}n$;
cf. \cite{LLR}, Example~1.7.5.
Hence Theorem~\ref{T1} yields a Gumbel limit for $M^{(2)}$ in this
case too.
(For some other distributions in the unit sphere, $M^{(2)}$ has an asymptotic
Weibull distribution as shown by
Mayer and Molchanov \cite{MM07}; see Section~\ref{SSweibull}.)
\end{example}

\section{Proof of Theorem~\texorpdfstring{\protect\ref{T1}}{1.1}}\label{Spf}

Let $\lambda$ be a fixed real number, and define two sequences $r_n$ and
$s_n$ of
positive numbers by
%
\begin{eqnarray}
r_n&:=& \frac{d-1}2\log\frac{a_n}{b_n}-\log\log
\frac
{a_n}{b_n}-\log c_d-\lambda, \label{rn}
\\
s_n&:=& \tfrac{1}2\log r_n. \label{sn}
\end{eqnarray}
(The value of $r_n$ is determined by the argument below,
but $s_n$ could be any sequence that tends to $\infty$ sufficiently slowly.)
Note that $r_n\to\infty$ and $s_n\to\infty$, and $s_n=o(r_n)$;
furthermore, $r_nb_n=o(a_n)$. We assume below tacitly that $n$ is so large
that $r_n>s_n>0$, and $(r_n+s_n)b_n<a_n$.

Further for convenience, we let
%
\begin{equation}
\tau_n:=\frac{d-1}2\log\frac{a_n}{b_n};
\end{equation}
thus (\ref{a}) is assumed to hold for $|t|\le\tau_n$
(and it then automatically holds uniformly for these $t$). Note that
$r_n+s_n\le\tau_n$,
at least for $n$ large; it suffices to consider only such $n$, and
thus (\ref{a}) holds uniformly for $|t|\le r_n+s_n$.

In this section we prove the following result, which immediately implies
Theorem~\ref{T1}, since $W_n$ defined in this theorem is related to $M^{(2)}_n$
by the
relation
$M^{(2)}_n> 2a_n-r_nb_n\iff W_n \neq0$.

\begin{theorem}
\label{T2}
Let $X_1,X_2,\ldots$ be as in Theorem~\ref{T1}, and
let $W_n$ be the number of pairs $(i,j)$ with $1\le i<j\le n$ such that
$|X_i-X_j|>2a_n-r_nb_n$. Then
$W_n\stackrel{{d}}{\longrightarrow}\operatorname
{Po}(e^{-\lambda})$.
\end{theorem}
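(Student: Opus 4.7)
The plan is to prove $W_n\dto\Po(e^{-\gl})$ via the method of factorial moments: since $\E[(N)_k]=\mu^k$ for $N\sim\Po(\mu)$, it suffices to establish $\E(W_n)_k\to e^{-k\gl}$ for every fixed $k\ge1$. Writing $W_n=\sum_{1\le i<j\le n}Y_{ij}$ with $Y_{ij}:=\bigett{|X_i-X_j|>D_n}$ and $D_n:=2a_n-r_nb_n$, each factorial moment is a sum over ordered $k$-tuples of distinct pairs, which we split according to the number of distinct vertices involved. To apply \eqref{a} uniformly one restricts throughout to the event $\{|X_i|\le a_n+(r_n+s_n)b_n$ for all $i\le n\}$; by \eqref{a} its complement has probability at most $n\cdot(1+o(1))e^{-r_n-s_n}/n=o(1)$, and on it each $u_i:=(|X_i|-a_n)/b_n\le r_n+s_n$.

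The first moment is the computational heart. Conditioning on $(|X_1|,|X_2|)=(a_n+ub_n,\,a_n+vb_n)$, spherical symmetry makes the angle $\theta$ between $X_1$ and $X_2$ have density proportional to $\sin^{d-2}\theta$ on $[0,\pi]$. Writing $\theta=\pi-\phi$ and expanding $\cos(\pi-\phi)=-1+\phi^2/2+O(\phi^4)$ inside $|X_1-X_2|^2=r^2+s^2-2rs\cos\theta$, the $(r+s)^2$ contributions cancel and $|X_1-X_2|>D_n$ becomes asymptotically the constraint $\phi^2<4(r_n+u+v)_+b_n/a_n$. The uniform measure on $S^{d-1}$ assigns a spherical cap of half-angle $\phi^*$ mass $\sim K_d(\phi^*)^{d-1}$ with $K_d:=\gG(d/2)/\bigpar{(d-1)\sqrt\pi\,\gG((d-1)/2)}$; combining this with the density approximation $f_{|X|}(a_n+ub_n)\,b_n\sim e^{-u}/n$ extracted from \eqref{a} yields
\begin{equation*}
\E W_n\sim \tfrac12 K_d\, 2^{d-1}\Bigparfrac{b_n}{a_n}^{(d-1)/2}\iint e^{-u-v}(r_n+u+v)_+^{(d-1)/2}\,du\,dv.
\end{equation*}
The substitution $w=r_n+u+v$, $t=u-v$ decouples the integrand: for bounded $w$ the $t$-slice has length $\sim 2r_n$, and the $w$-integral equals $e^{r_n}\gG((d+1)/2)$. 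Inserting $(b_n/a_n)^{(d-1)/2}=e^{-\tau_n}$ with $\tau_n=\tfrac{d-1}{2}\log(a_n/b_n)$, the definition of $r_n$, and the identity $\gG((d+1)/2)=\tfrac{d-1}{2}\gG((d-1)/2)$ forces $\E W_n\to e^{-\gl}$ with $\cco$ as in \eqref{cd}.

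For $k\ge2$, the contribution to $\E(W_n)_k$ from ordered $k$-tuples of vertex-disjoint pairs equals $(1+o(1))\bigpar{\binom n2 p_n}^k\to e^{-k\gl}$ by independence, so it remains to show that overlapping configurations contribute $o(1)$; for two pairs sharing a single vertex $i$, conditional independence given $X_i$ together with the bound $\P(|X_i-X_j|>D_n\mid X_i)\lesssim e^{u_i}/\bigpar{n\log(a_n/b_n)}$ (obtained by the same cap analysis) give $n^3\E\bigsqpar{\P(|X_i-X_j|>D_n\mid X_i)^2}=O(1/\log(a_n/b_n))=o(1)$, and higher-order overlaps are smaller by the same reasoning applied iteratively; Brun's sieve then concludes. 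The main obstacle is the first-moment asymptotic: $\tau_n$ and $r_n$ differ by only $\log\log(a_n/b_n)+O(1)$, the $t$-slice length grows linearly in $r_n\sim\tau_n$, and the residual factor $e^{r_n-\tau_n}$ decays like $1/\log(a_n/b_n)$, so extracting the correct constant requires a delicate cancellation of these two $\log(a_n/b_n)$-factors---precisely what selects the double-logarithmic correction in \eqref{rn} and the constant $\cco$. The safety margin $s_n=\tfrac12\log r_n$ is introduced precisely so that the integration stays strictly inside the window $|t|\le r_n+s_n<\tau_n$ where \eqref{a} is available with a sharp estimate.
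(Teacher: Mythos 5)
Your strategy matches the paper's: truncate so that the normalized lengths stay where \eqref{a} is uniform, compute a first moment via the spherical-cap estimate and a change of variables, bound the overlapping pairs, and conclude a Poisson limit. The paper invokes the second-moment theorem of Silverman and Brown where you use factorial moments and Brun's sieve; these are interchangeable here. However, your truncation level is wrong, and this is a genuine gap, not a cosmetic one.

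The paper truncates at $|X_i|\le a_n+s_nb_n$ in the definition \eqref{fn}, \emph{not} at $a_n+(r_n+s_n)b_n$. Together with $|X_i-X_j|>2a_n-r_nb_n$ this forces $|X_i|>a_n-(r_n+s_n)b_n$ automatically (see \eqref{uppner}), so $T_i:=(|X_i|-a_n)/b_n$ stays in $(-r_n-s_n,\,s_n]\subset[-\tau_n,\tau_n]$, exactly where \eqref{a} is uniform. Your cutoff $a_n+(r_n+s_n)b_n$ only forces $T_i>-2r_n-s_n$, which lies outside $[-\tau_n,\tau_n]$ since $r_n\sim\tau_n$, so \eqref{a} is not available over your integration range. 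Worse, the constant changes: in your substitution $w=r_n+u+v$, $t=u-v$, the $t$-slice at fixed $w$ has length $2(2U+r_n-w)$ where $U$ is the upper cutoff on $u,v$; with $U=s_n=o(r_n)$ this is $\sim 2r_n$, matching your claim, but with $U=r_n+s_n$ it is $\sim 6r_n$, so the first factorial moment of the truncated count converges to $3e^{-\gl}$, not $e^{-\gl}$. The overlap estimate fails too: $\E\bigsqpar{\P(|X_1-X_2|>2a_n-r_nb_n\mid X_1)^2}$ picks up a factor $e^{U}$, which equals $\sqrt{r_n}$ when $U=s_n=\tfrac12\log r_n$ (giving $o(n^{-3})$ as needed) but equals $e^{r_n+s_n}$, polynomially large in $a_n/b_n$, when $U=r_n+s_n$. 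In short, $s_n$ is not padding added to $r_n$ to stay below $\tau_n$ (that is automatic, since $r_n<\tau_n$ already); it is the \emph{entire} upper cutoff on $|X_i|-a_n$, taken to tend to infinity so that the complementary event is negligible, but to stay $o(r_n)$ so that rare exceptionally large $|X_i|$ --- the very reason $\E W_n\to\infty$, as the paper stresses --- do not inflate the truncated expectation. Replace $a_n+(r_n+s_n)b_n$ by $a_n+s_nb_n$ throughout and your computation reproduces \refL{LE} and \refL{Lvar}; the rest of your argument then stands.
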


We shall prove Theorem~\ref{T2} by standard Poisson approximation techniques.
However, some care is needed, since it turns out that the
mean does not converge in Theorem~\ref{T2}; at least in typical cases,
$\mathbb{E}W_n\to\infty$.
The problem is that while~(\ref{max1})--(\ref{max2}) show that the largest
$|X_i|$ typically is about $a_n$, the unlikely event that $\max|X_i|$ is
substantially larger gives a significant contribution to $\mathbb
{E}W_n$, since
an exceptionally large $X_i$ is likely to be part of many pairs with
$|X_i-X_j|>2a_n-r_nb_n$.
(A formal proof can be made by the arguments below, but taking $s_n$ to
be a
large constant times $r_n$.)
We thus do a truncation
(this is where we use $s_n$)
and define, for $\mathbf{x},\mathbf{y}\in\mathbb R^d$, the indicator function
%
\begin{equation}
\label{fn} f_n(\mathbf{x},\mathbf{y}):=\mathbf{1} \bigl\{|
\mathbf{x}-\mathbf {y}|>2a_n-r_nb_n \mbox{
and } |\mathbf{x} |,|\mathbf{y}|\le a_n+s_nb_n
\bigr\}
\end{equation}
{and the corresponding sum}
%
\begin{equation}
\label{w'n} W_n':=\sum_{1\le i<j\le n}f_n(X_i,X_j).
\end{equation}
(This is somewhat similar to the proofs of \cite{MR93} and \cite
{HK96} which
also use Poisson approximation, but they use a decomposition with
several terms.)
Note that if $f_n(\mathbf{x},\mathbf{y})\neq0$, then $|\mathbf
{x}|+|\mathbf{y}|\ge|\mathbf{x}-\mathbf{y}
|>2a_n-r_nb_n$
and thus
%
\begin{equation}
\label{uppner} a_n-(r_n+s_n)b_n
<|\mathbf{x}|,|\mathbf{y}|\le a_n+s_nb_n.
\end{equation}

\begin{remark}\label{Rnot}
The fact that $\mathbb{E}W_n\to\infty$ shows also that the asymptotic
distribution of
$M^{(2)}_n$ is \emph{not} the same as the asymptotic distribution of the
maximum of
${ n\choose2}$ independent random variables with the same distribution as
$|X_1-X_2|$. This is unlike the Weibull case (see Section~\ref
{SSweibull}), where
Mayer and Molchanov \cite{MM07} point out that such an equivalence holds.
\end{remark}

We use the following estimates; proofs are given later in this section.

\begin{lemma} \label{L0}
$\mathbb{P}(W_n\neq W_n')\to0$.
\end{lemma}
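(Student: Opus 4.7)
The plan is to observe that $W'_n \le W_n$ always, since the indicator $f_n$ in \eqref{fn} imposes the extra truncation condition $|\xx|, |\yy| \le a_n + s_n b_n$ on top of the interpoint-distance requirement defining $W_n$. Consequently $\{W_n \neq W'_n\} = \{W_n > W'_n\}$ coincides with the event that some pair $(i,j)$ satisfies $|X_i - X_j| > 2a_n - r_n b_n$ while at least one of $|X_i|, |X_j|$ exceeds $a_n + s_n b_n$. In particular this forces $M_n > a_n + s_n b_n$, so
\[
\P(W_n \neq W'_n) \le \P(M_n > a_n + s_n b_n).
\]

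Next I would bound the right-hand side by the union bound, $\P(M_n > a_n + s_n b_n) \le n\,\P(|X| > a_n + s_n b_n)$, and then apply the hypothesis \eqref{a} with $t = s_n$. For this to be legitimate one must check that $s_n \le \tau_n = \frac{d-1}{2}\log(a_n/b_n)$; but by \eqref{rn}--\eqref{sn}, $s_n = \tfrac12 \log r_n$ with $r_n$ of order $\log(a_n/b_n)$, so $s_n = O(\log\log(a_n/b_n))$, which is much smaller than $\tau_n$ for all large $n$. Hence \eqref{a} yields
\[
n\,\P(|X| > a_n + s_n b_n) = (1 + o(1))\,e^{-s_n}.
\]

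Finally, since $r_n \to \infty$ by \eqref{rn}, we have $s_n \to \infty$, so $e^{-s_n} \to 0$ and the proof is complete. The argument is entirely routine; the only point requiring care is verifying that $s_n$ lies within the uniformity range of \eqref{a}, which is built into the defining choice $s_n = \tfrac12\log r_n$ in \eqref{sn}. No concentration or Poisson-approximation input is needed for this lemma — it is purely a maximum-of-$|X_i|$ bound — and so there is no genuine obstacle.
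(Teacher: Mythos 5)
Your proof is correct and follows essentially the same route as the paper: bound $\P(W_n\neq W_n')$ by $\P(M_n>a_n+s_nb_n)$, apply the union bound and \eqref{a} with $t=s_n$, and use $s_n\to\infty$ together with $s_n\le\tau_n$ (which the paper already notes when observing $r_n+s_n\le\tau_n$ for large $n$). The extra detail you supply about $s_n=O(\log\log(a_n/b_n))$ is a fine, if slightly redundant, check.
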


\begin{lemma}
\label{LE}
%
\[
\mathbb{E}f_n(X_1,X_2) 
r_ne^{r_n}
=\frac{2+o(1)}{n^2}e^{-\lambda}.
\]
\end{lemma}

\begin{lemma}\label{Lvar}
\[
\mathbb{E} \bigl(f_n(X_1,X_2)f_n(X_1,X_3)
\bigr) = o \bigl(n^{-3} \bigr).
\]
\end{lemma}

\begin{pf*}{Proof of Theorem~\ref{T2} (and thus of Theorem~\ref{T1})}
Consider $W'_n$ defined in (\ref{w'n}), and note that Lemma~\ref{LE}
shows $\mathbb{E}W_n'\to e^{-\lambda}$.
Moreover, the Poisson convergence
%
\begin{equation}
\label{pow'} W_n'\stackrel{{d}} {
\longrightarrow}\operatorname {Po}\bigl(e^{-\lambda}\bigr)
\end{equation}
follows from Lemmas \ref{LE} and~\ref{Lvar} using a theorem
by Silverman and Brown \cite{SB}; see also Barbour and Eagleson \cite{BE84},
Barbour, Holst and Janson \cite{SJI}, Theorem~2.N and Corollary~2.N.1,
and Jammalamadaka and Janson \cite{JJ}, Theorem~3.1 and Remark~3.4.

The conclusion $W_n\stackrel{{d}}{\longrightarrow
}\operatorname{Po}(e^{-\lambda})$ now follows by Lemma~\ref{L0}.
\end{pf*}

\begin{pf*}{Proof of Lemma~\ref{L0}}
We have
\[
\mathbb{P}\bigl(W_n\neq W_n'\bigr) \le
\mathbb{P} \Bigl(\max_{i\le n} |X_i|
>a_n+s_nb_n \Bigr) \le n \mathbb{P}\bigl (|X|
>a_n+s_nb_n \bigr) \to0
\]
by (\ref{a}), since 
$s_n\to\infty$.
\end{pf*}

In order to prove Lemmas \ref{LE} and \ref{Lvar}, we need some estimates.

\begin{lemma}
\label{Lcos}
Let $Y$ and $Z$ be two independent random unit vectors in $\mathbb R^d$
such that
$Y$ is uniformly distributed on the unit sphere $S^{d-1}$, and let
$\Theta$
be the angle between $Y$ and $Z$. Then, as $\varepsilon\searrow0$,
\[
\mathbb{P}(1+\cos\Theta<\varepsilon) \sim \frac{2^{(d-3)/2}\Gamma(d/2)}{\sqrt\pi\Gamma((d+1)/2)} \varepsilon
^{(d-1)/2}.
\]
\end{lemma}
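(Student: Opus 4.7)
The plan is to exploit the rotational invariance of $Y$: conditioning on $Z$ and rotating so that $Z=e_1$, the variable $\cos\Theta$ becomes the first coordinate $Y_1$ of a uniform random point on $S^{d-1}$. I would then invoke the classical marginal density
$$h_d(y) = \frac{\gG(d/2)}{\sqrt\pi\,\gG((d-1)/2)}\,(1-y^2)^{(d-3)/2},\qquad y\in(-1,1),$$
obtained by projecting the surface measure of $S^{d-1}$ onto a coordinate axis. The event $\{1+\cos\Theta<\eps\}$ then translates to $\{Y_1<-1+\eps\}$, and the substitution $u=y+1$ puts the probability in the form
$$\P(1+\cos\Theta<\eps)=\frac{\gG(d/2)}{\sqrt\pi\,\gG((d-1)/2)}\int_0^\eps \bigpar{u(2-u)}^{(d-3)/2}\,du.$$

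For the asymptotics, on $[0,\eps]$ I would write $u(2-u)=2u\bigpar{1+O(\eps)}$, so that the integrand equals $(2u)^{(d-3)/2}\bigpar{1+O(\eps)}$ and
$$\int_0^\eps\bigpar{u(2-u)}^{(d-3)/2}\,du \;\sim\; 2^{(d-3)/2}\int_0^\eps u^{(d-3)/2}\,du \;=\; \frac{2^{(d-1)/2}}{d-1}\eps^{(d-1)/2}.$$
The identity $\gG((d+1)/2)=\tfrac{d-1}{2}\gG((d-1)/2)$ then collapses the resulting prefactor to $\dfrac{2^{(d-3)/2}\gG(d/2)}{\sqrt\pi\,\gG((d+1)/2)}$, which is the stated constant.

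There is no real obstacle here; this is a straightforward marginal density computation. The only subtlety is that the exponent $(d-3)/2$ is negative when $d=2$, so $h_d$ has an integrable singularity at $\pm1$; however, near $u=0$ the integrand is dominated by $u^{-1/2}$, which remains integrable, and the same change of variables and leading-order expansion go through verbatim.
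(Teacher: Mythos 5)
Your proof is correct and takes essentially the same route as the paper: by rotational invariance reduce to $Z=e_1$, identify $\cos\Theta$ with the first coordinate of a uniform point on $S^{d-1}$ with marginal density proportional to $(1-y^2)^{(d-3)/2}$, and extract the leading order of the integral near $-1$ via the factorization $1-y^2=(1+y)(1-y)\approx 2(1+y)$. The paper derives the normalizing constant by an explicit Beta-function calculation rather than quoting it, but the argument is otherwise the same; your remark about the integrable singularity at $d=2$ is a correct and worthwhile clarification.
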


\begin{pf}
By rotational invariance,
we may assume that 
$Z=(1,0,\ldots,0)$. In this case, if
$Y=(\eta_1,\ldots,\eta_d)$, then $\cos\Theta=\langle Y,Z\rangle
=\eta
_1$; moreover,
it is well known (and easily seen) that $\eta_1$ has the density function
\[
g(x)=c''_d\bigl(1-x^2
\bigr)^{(d-3)/2},\qquad -1<x<1,
\]
where
\begin{eqnarray*}
1/c''_d&=& \int_{-1}^1
\bigl(1-x^2\bigr)^{(d-3)/2}\,\mathrm{d}x = \int
_{0}^1(1-y)^{(d-3)/2}y^{-1/2}
\,\mathrm{d}y \\
&=&\frac{\Gamma((d-1)/2)\Gamma(1/2)}{\Gamma(d/2)}.
\end{eqnarray*}
The result follows
by a simple calculation.
\end{pf}

\begin{lemma}
\label{L2}
If\/ $Y$ and $Z$ are two independent
random vectors in $\mathbb R^d$ such that $Y$ is uniformly
distributed on the sphere $|Y|=a_n+tb_n$, and $Z$ has any
distribution on the sphere $|Z|=a_n+ub_n$,
with $|t|,|u|\le r_n+s_n$,
then
uniformly in all such $t$ and $u$,
\[
\mathbb{P}\bigl(|Y-Z|>2a_n-r_nb_n\bigr) \sim
c'''_d \biggl(
\frac{b_n}{a_n} \biggr)^{(d-1)/2}(r_n+t+u)_+^{(d-1)/2}
\]
with
\[
c'''_d:=2^{(d-1)/2}
\frac{2^{(d-3)/2}\Gamma(d/2)}{\sqrt\pi\Gamma((d+1)/2)} =\frac{2^{d-2}\Gamma(d/2)}{\sqrt\pi\Gamma((d+1)/2)}.
\]
%
\end{lemma}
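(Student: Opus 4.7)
The plan is to exploit the rotational symmetry of $Y$ to reduce the calculation to a one-dimensional problem about the angle between $Y$ and $Z$, and then invoke \refL{Lcos}.

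First I would condition on $Z$. Since the distribution of $|Y-Z|$ depends on $Z$ only through the angle $\Theta$ between $Y$ and $Z$, and since $Y$ is uniform on its sphere, $\Theta$ has the same distribution as the angle between $Y/|Y|$ and an arbitrary fixed unit vector. Thus it suffices to prove the estimate with $Z$ deterministic. Write $R_Y\=a_n+tb_n$ and $R_Z\=a_n+ub_n$; the law of cosines gives
\begin{equation*}
|Y-Z|^2 = R_Y^2+R_Z^2-2R_YR_Z\cos\Theta
= (R_Y+R_Z)^2 - 2R_YR_Z(1+\cos\Theta).
\end{equation*}

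Next I would set $\eps\=1+\cos\Theta$ and $\gd\=b_n/a_n$ and rewrite the event $|Y-Z|>2a_n-r_nb_n$ as an inequality for $\eps$. Expanding,
\begin{equation*}
(R_Y+R_Z)^2 - (2a_n-r_nb_n)^2
= a_n^2\bigsqpar{(2+(t+u)\gd)^2 - (2-r_n\gd)^2}
= 4 a_n^2 (r_n+t+u)\gd + O(a_n^2\tau_n^2\gd^2),
\end{equation*}
and $2R_YR_Z = 2a_n^2(1+O(\tau_n\gd))$. Since $|t|,|u|,r_n\le \tau_n$ and $\tau_n\gd=\tfrac{d-1}2\gd\log(1/\gd)\to0$, the condition $|Y-Z|>2a_n-r_nb_n$ becomes
\begin{equation*}
\eps < 2(r_n+t+u)\gd\,(1+o(1)),
\end{equation*}
uniformly in admissible $t,u$, where the $o(1)$ does not depend on the sign of $r_n+t+u$. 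If $r_n+t+u\le 0$ then the right-hand side is nonpositive and the event has probability $0$, matching the $(\cdot)_+$ in the claim.

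Finally I would apply \refL{Lcos}. For $r_n+t+u>0$ the upper bound on $\eps$ tends to $0$ uniformly (being at most $6\tau_n\gd\to 0$), so the asymptotic equivalence in \refL{Lcos} applies uniformly, and we obtain
\begin{equation*}
\P(|Y-Z|>2a_n-r_nb_n) \sim
\frac{2^{(d-3)/2}\gG(d/2)}{\sqrt\pi\,\gG((d+1)/2)} \bigsqpar{2(r_n+t+u)\gd}^{(d-1)/2}
= \ccb \bad (r_n+t+u)_+^{(d-1)/2},
\end{equation*}
after collecting the factors of $2$ into $\ccb = 2^{(d-1)/2}\cdot\frac{2^{(d-3)/2}\gG(d/2)}{\sqrt\pi\,\gG((d+1)/2)}$. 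The only delicate point is uniformity: one must verify that the $(1+o(1))$ in the threshold for $\eps$ can be carried through the power $(d-1)/2$ uniformly in $t,u$ over the range $|t|,|u|\le r_n+s_n$, and that \refL{Lcos}'s asymptotic is uniform on the shrinking range of $\eps$ we use. Both follow because the upper bound $6\tau_n\gd$ on $\eps$ goes to $0$ with $n$, making the error in \refL{Lcos} uniform in $t,u$.
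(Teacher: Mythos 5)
Your overall approach is exactly the paper's: law of cosines to convert the event into a constraint on $1+\cos\Theta$, then \refL{Lcos}. However, there is a gap in how you pass from the additive error bound to a multiplicative one. You expand
\begin{equation*}
(R_Y+R_Z)^2-(2a_n-r_nb_n)^2 = 4a_n^2(r_n+t+u)\gd + O\bigpar{a_n^2\tau_n^2\gd^2}
\end{equation*}
and then assert that the threshold for $\eps$ is $2(r_n+t+u)\gd\,(1+o(1))$. That inference is not valid uniformly: the lemma allows $t,u$ with $|t|,|u|\le r_n+s_n$, so $r_n+t+u$ can be an arbitrarily small positive number, in which case the additive error $O(a_n^2\tau_n^2\gd^2)$ is \emph{not} $o$ of the main term $4a_n^2(r_n+t+u)\gd$. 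You cannot absorb it into a $(1+o(1))$ factor, and this is precisely the regime that matters for the uniformity claim in the lemma. The repair is to factor rather than expand: with $A=2a_n+(t+u)b_n$ and $B=2a_n-r_nb_n$ one has $A^2-B^2=(A-B)(A+B)$ with $A-B=(r_n+t+u)b_n$ \emph{exactly}, so the full expression is $(r_n+t+u)b_n\bigpar{4a_n+(t+u-r_n)b_n}=4a_n(r_n+t+u)b_n\bigpar{1+O(\tau_n\gd)}$, and the $(1+o(1))$ is genuinely multiplicative even when $r_n+t+u$ is tiny (the ``error'' term $(t+u)^2-r_n^2=(t+u-r_n)(t+u+r_n)$ shares the crucial factor). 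This is exactly what the paper does; with that one-line change your argument is the paper's proof.
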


\begin{pf}
By the cosine formula, letting $\Theta$ be the angle between $Y$ and $Z$,
\[
|Y-Z|^2=|Y|^2+|Z|^2-2|Y| |Z|\cos\Theta =
\bigl(|Y|+|Z|\bigr)^2-2|Y| |Z|(1+\cos\Theta).
\]
Hence,
by the assumption $(r_n+s_n)b_n=o(a_n)$ and thus $tb_n,ub_n=o(a_n)$,
\begin{eqnarray*}
&&|Y-Z|>2a_n-r_nb_n
\\
&&\qquad\iff\quad \bigl(2a_n+(t+u)b_n \bigr)^2-2(a_n+tb_n)
(a_n+ub_n) (1+\cos\Theta) \\
&&\qquad\qquad\hspace*{40pt}>(2a_n-r_nb_n)^2
\\
&&\qquad \iff\quad 1+\cos\Theta  <\frac{(2a_n+(t+u)b_n)^2-(2a_n-r_nb_n)^2}{2(a_n+tb_n)(a_n+ub_n)} 
\\
&&\hspace*{40pt}\qquad\qquad\qquad =\frac{2b_n}{a_n}(r_n+t+u) \bigl(1+o(1) \bigr).
\end{eqnarray*}
The result follows by Lemma~\ref{Lcos}
(applied to $Y/|Y|$ and $Z/|Z|$; the angle $\Theta$ remains the same),
using again that $r_nb_n=o(a_n)$;
the probability is obviously 0 when $r_n+t+u\le0$.
\end{pf}

\begin{remark}\label{RL2}
In this section
we use fixed sequences $a_n,b_n,r_n,s_n$,
but we note for future use that Lemma~\ref{L2}
more generally holds for any positive sequences with $(1+r_n+s_n)b_n=o(a_n)$.
\end{remark}

We let $X$ be a random variable with $X\stackrel{{d}}{=}X_i$
and define
%
\begin{equation}
\label{tn} T_n:=\bigl(|X|-a_n\bigr)/b_n;
\end{equation}
thus
$|X| =a_n+T_nb_n$ and (\ref{a}) says that
%
\begin{equation}
\label{tnt} \mathbb{P}(T_n>t)= \frac{1+o(1)}n
e^{-t},
\end{equation}
for all $t=t_n$ with $|t|\le\tau_n$,
and in particular for all $t$ with $|t|\le r_n+s_n$.

\begin{lemma}
\label{L4}
Suppose that the function $h(t)$ is nonnegative,
continuous and increasing in an interval
$[t_0,t_1]$, with $[t_0,t_1]\subseteq[-\tau_n,\tau_n]$.
Then, uniformly for all such intervals $[t_0,t_1]$ and functions $h$,
\[
\mathbb{E} \bigl(h(T_n)\mathbf{1}\{t_0<T_n
\le t_1\} \bigr) =\frac{1+o(1)}n\int_{t_0}^{t_1}h(t)e^{-t}
\,\mathrm{d}t+o \biggl(\frac
{h(t_1)e^{-t_1}}{n} \biggr).
\]
\end{lemma}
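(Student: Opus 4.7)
The plan is to write the expectation as a Stieltjes integral against the distribution of $T_n$, express it via tail probabilities, and then substitute the uniform asymptotic $\P(T_n>t)=\etton e^{-t}$. The key identity, valid because $h$ is continuous and non-decreasing on $[t_0,t_1]$, is
\begin{equation*}
 h(T_n)\ett{t_0<T_n\le t_1}
 = h(t_0)\ett{t_0<T_n\le t_1}
 + \int_{t_0}^{t_1} \ett{t<T_n\le t_1}\,dh(t),
\end{equation*}
obtained by writing $h(T_n)=h(t_0)+\int_{t_0}^{T_n}dh(t)$ on the event $T_n>t_0$. Taking expectations and writing $G_n(t)\=\P(T_n>t)$, this yields
\begin{equation*}
\E\bigsqpar{h(T_n)\ett{t_0<T_n\le t_1}}
= h(t_0)\bigpar{G_n(t_0)-G_n(t_1)}
+ \int_{t_0}^{t_1}\bigpar{G_n(t)-G_n(t_1)}\,dh(t).
\end{equation*}

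Next, I would write $G_n(t)=\frac1n(1+\delta_n(t))e^{-t}$, where by hypothesis $\delta_n\=\sup_{|t|\le\tau_n}|\delta_n(t)|\to0$, and split the right-hand side into a main term (from the $1/n$ part) and an error term (from the $\delta_n(t)/n$ part). A second integration by parts in the main term gives the clean identity
\begin{equation*}
 h(t_0)\bigpar{e^{-t_0}-e^{-t_1}}
 + \inttt \bigpar{e^{-t}-e^{-t_1}}\,dh(t)
 = \inttt h(t)e^{-t}\,dt,
\end{equation*}
(the boundary $e^{-t_1}$ contributions collapse to $0$, and the rest is integration by parts with $u=h$, $dv=e^{-t}dt$), so the main term equals $\frac1n\inttt h(t)e^{-t}\,dt$ exactly.

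For the error term, after taking absolute values, I would need to bound
\begin{equation*}
 h(t_0)\bigpar{e^{-t_0}+e^{-t_1}}
 + \inttt \bigpar{e^{-t}+e^{-t_1}}\,dh(t).
\end{equation*}
The same two integration-by-parts identities used above evaluate this sum to
$2h(t_1)e^{-t_1}+\inttt h(t)e^{-t}\,dt$; multiplying by $\delta_n/n$ gives an error of size $o\bigpar{h(t_1)e^{-t_1}/n}+o\bigpar{\inttt h(t)e^{-t}\,dt/n}$, which combines with the main term to produce the stated formula. The uniformity in $[t_0,t_1]$ and $h$ is automatic because the bound $\delta_n\to0$ is uniform on $|t|\le\tau_n$.

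The main obstacle to watch out for is that $t_0$ may be very negative, so that $e^{-t_0}$ (and correspondingly $h(t_0)e^{-t_0}$) can be huge; one might worry this prevents absorbing the $\delta_n$ terms into an $o(h(t_1)e^{-t_1}/n)$ error. The cancellation in the integration-by-parts identity above is precisely what handles this: the potentially large term $h(t_0)e^{-t_0}$ combines with $\inttt e^{-t}\,dh(t)$ to give just $h(t_1)e^{-t_1}+\inttt h(t)e^{-t}dt$, both of which are acceptable.
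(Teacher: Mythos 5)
Your proof is correct and follows essentially the same route as the paper: the Fubini-based decomposition $h(T_n)\ett{t_0<T_n\le t_1}=h(t_0)\ett{t_0<T_n\le t_1}+\int_{t_0}^{t_1}\ett{t<T_n\le t_1}\,\ddx h(t)$, substitution of the uniform tail asymptotic, and a second integration by parts to collapse the boundary terms. You are a bit more explicit than the paper in separating the main term from the $\delta_n$-error and bounding the latter using monotonicity of $h$, which makes the uniformity claim fully transparent.
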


\begin{pf}
Let $\mu=\mu_n:={\mathcal L}(T_n)$ denote the distribution of $T_n$.
Then, using (\ref{tnt}) and two integrations by parts,
%
\begin{eqnarray*}
&&\mathbb{E} \bigl(h(T_n)\mathbf{1}\{t_0<T_n
\le t_1\} \bigr)\\
&&\qquad =\int_{t_0+}^{t_1}h(t)
\,\mathrm{d}\mu(t) =-\int_{t_0+}^{t_1}h(t)\,\mathrm{d}
\mathbb{P}(t<T_n\le t_1) 
\\
&&\qquad =h(t_0)\mathbb{P}(t_0< T_n\le
t_1)+\int_{t_0}^{t_1}\,\mathrm{d}h(u)
\mathbb{P}(u<T_n\le t_1) 
\\
&&\qquad =\frac{1+o(1)}n \biggl(h(t_0)e^{-t_0}+\int
_{t_0}^{t_1}\,\mathrm{d}h(u) e^{-u} \biggr)\\
&&\qquad\quad{} -
\frac{1+o(1)}n \biggl(h(t_0)e^{-t_1}+\int
_{t_0}^{t_1}\,\mathrm{d}h(u) e^{-t_1} \biggr)
\\
&&\qquad =\frac{1+o(1)}n\int_{t_0}^{t_1}h(u)e^{-u}
\,\mathrm{d}u +\frac{o(1)}n{h(t_1)e^{-t_1}},
\end{eqnarray*}
%
with all $o(1)$ uniform in $t_0$, $t_1$ and $h$.
\end{pf}

\begin{lemma}
\label{L3}
Let\/ $\mathbf{x}$ be a vector in $\mathbb R^d$ with $|\mathbf
{x}|=a_n+ub_n$ where
$-r_n-s_n< u\le s_n$.
Then, uniformly for all such $\mathbf{x}$,
\[
\mathbb{E}f_n(X,\mathbf{x})=\frac{1+o(1)}nc''''_d
\biggl(\frac
{b_n}{a_n} \biggr)^{(d-1)/2} \bigl(e^{r_n+u}+O
\bigl((r_n+s_n+u+1)^{(d-1)/2}e^{-s_n}
\bigr) \bigr),
\]
where
$c''''_d:=\Gamma((d+1)/2)c'''_d=
2^{d-2}\Gamma(d/2)/\sqrt\pi$.
\end{lemma}

\begin{pf}
We use $T_n$ defined by (\ref{tn}), and note that $f_n(X,\mathbf
{x})=0$ unless
$-r_n-s_n <T_n\le s_n$; see (\ref{uppner}).
Moreover, Lemma~\ref{L2} shows that for $t\in(-r_n-s_n,s_n]$,
\[
\mathbb{E}\bigl(f_n(X,\mathbf{x})\mid T_n=t\bigr) \sim
c'''_d \biggl(
\frac{b_n}{a_n} \biggr)^{(d-1)/2}(r_n+t+u)_+^{(d-1)/2},
\]
uniformly in these $u$ and $t$, and thus
\[
\mathbb{E}f_n(X,\mathbf{x}) \sim c'''_d
\biggl(\frac{b_n}{a_n} \biggr)^{(d-1)/2} \mathbb{E} \bigl((r_n+u+T_n)_+^{(d-1)/2}
\mathbf{1}\{ -r_n-s_n<T_n\le
s_n\} \bigr).
\]

We apply Lemma~\ref{L4} with $h(t)=(r_n+u+t)_+^{(d-1)/2}$ and obtain
\begin{eqnarray*}
&&\mathbb{E} \bigl((r_n+u+T_n)_+^{(d-1)/2}
\mathbf{1}\{ -r_n-s_n<T_n\le
s_n\} \bigr)
\\
&&\qquad =\frac{1+o(1)}n\int_{-r_n-s_n}^{s_n}(r_n+u+t)_+^{(d-1)/2}
e^{-t}\,\mathrm{d}t\\
&&\qquad\quad{} + o \biggl(\frac{(r_n+u+s_n)^{(d-1)/2}e^{-s_n}}{n} \biggr)
\\
&&\qquad =\frac{1+o(1)}n\int_{0}^{r_n+s_n+u}
x^{(d-1)/2} e^{r_n+u-x}\mathrm {d}x + o \biggl(\frac{(r_n+u+s_n)^{(d-1)/2}e^{-s_n}}{n}
\biggr) 
\\
&&\qquad =\frac{1+o(1)}n\Gamma \biggl(\frac{d+1}{2} \biggr)
\bigl(e^{r_n+u}+O \bigl((r_n+s_n+u+1)^{(d-1)/2}e^{-s_n}
\bigr) \bigr),
\end{eqnarray*}
and the result follows.
\end{pf}

\begin{pf*}{Proof of Lemma~\ref{LE}}
We condition on $X_1$ and apply Lemma~\ref{L3}, with $X$ replaced by $X_2$
and $u=T_n$ given by (\ref{tn}) with $X=X_1$; thus, by 
(\ref{uppner}),
%
\begin{eqnarray}
\label{lunsen} &&\mathbb{E} \bigl(f_n(X_1,X_2)
\mid X_1 \bigr)\nonumber\\
&&\qquad = \mathbb{E} \bigl(f_n(X_2,X_1)
\mid X_1 \bigr)
\nonumber
\\[-8pt]
\\[-8pt]
\nonumber
&&\qquad=\frac{1+o(1)}nc''''_d
\biggl(\frac{b_n}{a_n} \biggr)^{(d-1)/2}
\\
&&\qquad\quad{} \times\bigl(e^{r_n+T_n}+O \bigl((r_n+s_n+T_n+1)^{(d-1)/2}e^{-s_n}
\bigr) \bigr) \mathbf{1}\{-r_n-s_n<T_n
\le s_n\}.\nonumber
\end{eqnarray}
Hence
\begin{eqnarray*}
&&\hspace*{-4pt}\mathbb{E}f_n(X_1,X_2)\\
&&\hspace*{-7pt}\qquad= \mathbb{E} \bigl(
\mathbb{E} \bigl(f_n(X_1,X_2)\mid
T_n \bigr) \bigr)
\\
&&\hspace*{-7pt}\qquad=\frac{1+o(1)}nc''''_d
\biggl(\frac{b_n}{a_n} \biggr)^{(d-1)/2}
\\
&&\hspace*{-7pt}\qquad\quad{} \times\mathbb{E} \bigl( \bigl(e^{r_n+T_n}+O \bigl((r_n+s_n+T_n+1)^{(d-1)/2}e^{-s_n}
\bigr) \bigr) \mathbf{1}\{ -r_n-s_n<T_n
\le s_n\} \bigr).
\end{eqnarray*}
By Lemma~\ref{L4} with $h(t)=e^t$ we obtain,
since $s_n=o(r_n)$ and $r_n\to\infty$,
\[
\mathbb{E} \bigl(e^{T_n}\mathbf{1}\{-r_n-s_n<T_n
\le s_n\} \bigr) =\frac{1+o(1)}n(r_n+2s_n)+o
\biggl(\frac{1}{n} \biggr) =\frac{1+o(1)}nr_n
\]
and by Lemma~\ref{L4} with $h(t)=(r_n+s_n+t+1)^{(d-1)/2}$,
\[
\mathbb{E} \bigl((r_n+s_n+T_n+1)^{(d-1)/2}
\mathbf{1}\{-r_n-s_n<T_n\le
s_n\} \bigr) =O \biggl(\frac{e^{r_n+s_n}}{n} \biggr),
\]
and the result follows, using our choice of $r_n$ in (\ref{rn}) and
$c_d=(d-1)c''''_d/4$.
\end{pf*}

\begin{pf*}{Proof of Lemma~\ref{Lvar}}
By (\ref{lunsen}),
\[
\mathbb{E} \bigl(f_n(X_1,X_2)\mid
X_1 \bigr) =O \biggl(\frac{1}n \biggl(\frac{b_n}{a_n}
\biggr)^{(d-1)/2}e^{r_n+T_n}\mathbf{1}\{-r_n-s_n<T_n
\le s_n\} \biggr) ,
\]
where we used $(r_n+s_n+T_n+1)^{(d-1)/2}=O(e^{r_n+s_n+T_n})$.
Hence, using Lemma~\ref{L4} with $h(t)=e^{2t}$,
\begin{eqnarray*}
&&\mathbb{E} \bigl(f_n(X_1,X_2)f_n(X_1,X_3)
\bigr)\\
&&\qquad=\mathbb{E} \bigl(\mathbb{E} \bigl(f_n(X_1,X_2)
\mid X_1 \bigr)^2 \bigr)
\\
&&\qquad =O \biggl(\frac{1}{n^2} \biggl(\frac{b_n}{a_n} \biggr)^{d-1}
\mathbb {E} \bigl(e^{2r_n+2T_n}\mathbf{1}\{-r_n-s_n<T_n
\le s_n\} \bigr) \biggr)
\\
&&\qquad =O \biggl(\frac{1}{n^3} \biggl(\frac{b_n}{a_n} \biggr)^{d-1}e^{2r_n+s_n}
\biggr) =O \biggl(\frac{1}{n^3}\cdot\frac{ e^{s_n}}{r_n^2} \biggr),
\end{eqnarray*}
and the result follows by our choice (\ref{sn}) of $s_n$.
\end{pf*}

\section{Proofs of Theorems \texorpdfstring{\protect\ref{TC1}}{1.4} and
\texorpdfstring{\protect\ref
{TC2}}{1.5}}\label{Spf2}
\mbox{}
\begin{pf*}{Proof of Theorem~\ref{TC1}}
First, by~(\ref{g'}) and $a_n{\to\infty}$, $g'(a_n)>0$ for large $n$ at
least, so
$1/g'(a_n)>0$.
Furthermore, $a_ng'(a_n)\to\infty$ by (\ref{g'}), so
$b_n\sim1/g'(a_n)$ and
$b_n/a_n\to0$ by (\ref{bn}).

We will prove that (\ref{a}) holds, uniformly for all $t$ with
$|t|\le A \log(a_n/b_n)$, for any fixed $A$. The result then follows by
Theorem~\ref{T1}. In order to prove (\ref{a}), we may suppose that
$b_n=1/g'(a_n)$;
the general case (\ref{bn}) follows easily.
We may also suppose that $n$ is large.

Let $A>0$ be a constant, and let, for $x$ so large that $xg'(x)>1$,
%
\begin{equation}
\label{gd} \delta(x):=A\frac{\log (xg'(x) )}{g'(x)}
\end{equation}
and
%
\begin{equation}
I_x:= \bigl[x-\delta(x),x+\delta(x)\bigr].
\end{equation}
Since $\delta(x)/x\to0$ as ${x\to\infty}$ by (\ref{g'}), we may
assume that
$0<\delta(x)<x/2$; hence $I_x\subset(x/2,2x)$.
We claim that, for large $x$,
%
\begin{equation}
\label{ix} \tfrac{1}2g'(x)< g'(y)<
2g'(x),\qquad y\in I_x.
\end{equation}
To show this, assume that (\ref{ix}) fails for some $x$, and let $y$
by the
point in $I_x$ nearest to $x$ where (\ref{ix}) fails.
(If there are two possible choices for $y$, take any of the points.)
Then
%
\begin{equation}
\label{w0} \biggl\llvert \frac{1}{g'(y)}-\frac{1}{g'(x)}\biggr\rrvert
\ge\frac{1}{2g'(x)}.
\end{equation}
By (\ref{w0}) and
the mean value theorem, there exists $z\in[y,x]$ (if $y<x$)
or $z\in[x,y]$ (if $y>x$) such that
%
\begin{equation}
\label{w1} \frac{1}{2g'(x)} \le|y-x| \biggl\llvert \frac{\mathrm{d}}{\mathrm{d}z}
\frac{1}{g'(z)}\biggr\rrvert = |y-x|\frac{|g''(z)| }{g'(z)^2} \le\delta(x)
\frac{|g''(z)| }{g'(z)^2}.
\end{equation}
On the other hand,
$\frac{1}2g'(x)\le g'(z)\le2g'(x)$ by the choice of $y$;
furthermore, $z\in I_x$ so $x/2<z<2x$.
Hence (\ref{g''}) implies, for large $x$, using (\ref{g'}),
%
\begin{equation}
\label{w2} \frac{|g''(z)| }{g'(z)^2} \le\frac{1}{\log^2 (zg'(z) )} \le\frac{2}{\log^2 (xg'(x) )}.
\end{equation}
However, (\ref{w1}) and (\ref{w2}) combine to yield
\[
\frac{1}{2g'(x)} \le\frac{2\delta(x)}{\log^2 (xg'(x) )} = \frac{2A}{g'(x)\log (xg'(x) )},
\]
which contradicts (\ref{g'}) for large $x$.
This contradiction shows that (\ref{ix}) holds for large $x$.

Next, (\ref{gd}), (\ref{ix}) and (\ref{g''}) imply that, for large $x$,
\begin{eqnarray*}
\sup_{y\in I_x}\bigl|g''(y)\bigr|
\delta(x)^2 &=& A^2 \frac{ \sup_{y\in I_x}|g''(y)|}{g'(x)^2}\log^2
\bigl(xg'(x) \bigr)
\\
& \le & 5 A^2 \sup_{y\in I_x}\frac{|g''(y)|}{g'(y)^2}
\log^2 \bigl(yg'(y) \bigr) \to0
\end{eqnarray*}
as ${x\to\infty}$.
Consequently, a Taylor expansion yields, uniformly for $|u|\le\delta(x)$,
%
\begin{equation}
\label{gxu} g(x+u)=g(x)+ug'(x)+o(1)
\end{equation}
as ${x\to\infty}$.
Taking $x=a_n$ and $u=tb_n=t/g'(a_n)$,
with $|t|\le A \log(a_n/b_n)=A\log(a_ng'(a_n))$,
we have $|u|\le\delta(a_n)$ by (\ref{gd}), and thus (\ref{gxu})
applies and
shows, by (\ref{an}) and our choice $b_n=1/g'(a_n)$,
%
\begin{equation}
g(a_n+tb_n)=g(a_n)+tb_ng'(a_n)+o(1)=
\log n + t + o(1),
\end{equation}
uniformly for such $t$.
By (\ref{g}), this yields
\[
\mathbb{P}\bigl(|X_1|>a_n+tb_n\bigr) =\exp \bigl(-
\log n-t+o(1) \bigr)
\]
uniformly for
$|t|\le A \log(a_n/b_n)$, which is (\ref{a}).
The result follows by Theorem~\ref{T1}.
\end{pf*}

Before proving Theorem~\ref{TC2} we give an elementary lemma.

\begin{lemma}\label{Li}
If $\beta,\gamma>0$ and $h(x)$ is a positive differentiable function
such that
$(\log h(x))' = o(x^{\gamma-1})$ as ${x\to\infty}$, then
\[
\label{li} \int_x^\infty h(y)e^{-\beta y^\gamma}
\,\mathrm{d}y \sim(\beta\gamma)^{-1}x^{1-\gamma} h(x)e^{-\beta x^\gamma}
\qquad\mbox{as } {x\to\infty}.
\]
\end{lemma}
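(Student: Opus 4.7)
The plan is to prove the asymptotic by integration by parts, exploiting that $\frac{\ddx}{\ddx y}e^{-\gb y^\gam}=-\gb\gam y^{\gam-1}e^{-\gb y^\gam}$. Shifting the antiderivative of the exponential onto the factor $h(y)/(\gb\gam y^{\gam-1})$ will produce the conjectured leading term as a boundary contribution at $y=x$. Writing $I(x)\=\int_x^\infty h(y)e^{-\gb y^\gam}\dd y$, integration by parts yields
\begin{equation*}
I(x)=\frac{h(x)e^{-\gb x^\gam}}{\gb\gam x^{\gam-1}}
+\int_x^\infty \frac{\ddx}{\ddx y}\Bigpar{\frac{h(y)}{\gb\gam y^{\gam-1}}}e^{-\gb y^\gam}\dd y,
\end{equation*}
provided the boundary term at $\infty$ vanishes. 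The first term on the right is exactly the claimed asymptotic, so it suffices to show that the remainder integral is $o(I(x))$.

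For the error estimate, expand
\begin{equation*}
\frac{\ddx}{\ddx y}\Bigpar{\frac{h(y)}{\gb\gam y^{\gam-1}}}
=\frac{h(y)}{\gb\gam y^{\gam-1}}\Bigsqpar{(\log h)'(y)-\frac{\gam-1}{y}}.
\end{equation*}
The hypothesis $(\log h)'(y)=o(y^{\gam-1})$, together with the elementary fact that $1/y=o(y^{\gam-1})$ (valid since $\gam>0$), shows the bracket is $o(y^{\gam-1})$; hence for any $\eps>0$ and $y$ sufficiently large the whole derivative is bounded in absolute value by $\eps h(y)/(\gb\gam)$. Integrating against $e^{-\gb y^\gam}$ then bounds the remainder by $(\eps/(\gb\gam))I(x)$, so
\begin{equation*}
\bigabs{I(x)-(\gb\gam)\qw x^{1-\gam}h(x)e^{-\gb x^\gam}}\le(\eps/(\gb\gam))I(x).
\end{equation*}
This pins the ratio $(\gb\gam)\qw x^{1-\gam}h(x)e^{-\gb x^\gam}/I(x)$ into the interval $[1-\eps/(\gb\gam),\,1+\eps/(\gb\gam)]$, and since $\eps$ is arbitrary the ratio tends to $1$, which is the claim.

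Two side matters remain. First, one must check that the boundary term at $\infty$ indeed vanishes, \ie{} $h(y)y^{1-\gam}e^{-\gb y^\gam}\to0$ as $y\to\infty$. Integrating the hypothesis $(\log h)'(z)=o(z^{\gam-1})$ gives, for any fixed $\eps>0$ and $y$ large, $\log h(y)\le\log h(x_\eps)+(\eps/\gam)y^\gam$, so $h(y)e^{-\gb y^\gam}=O\bigpar{e^{-(\gb-\eps/\gam)y^\gam}}$, which decays super-polynomially once $\eps<\gb\gam$; the same bound incidentally guarantees $I(x)<\infty$ and legitimises the integration by parts. The main (and rather mild) obstacle of the argument is the apparent circularity in the error estimate, where the remainder is controlled by $I(x)$ itself rather than by the target quantity; this is harmless precisely because $\eps$ is arbitrary, so rearranging the self-referential bound still extracts the desired sharp asymptotic.
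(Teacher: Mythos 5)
Your proof is correct, and it takes a genuinely different route from the paper's. The paper first reduces to the case $\gam=1$ by the substitution $y=z^{1/\gam}$, then bounds the ratio $h(x+t)e^{-\gb(x+t)}/\bigpar{h(x)e^{-\gb x}}$ above and below by $e^{-(\gb\mp\eps(x))t}$ with $\eps(x)=\sup_{y\ge x}|(\log h)'(y)|$, and integrates these exponential bounds in $t$. You instead integrate by parts once, peeling off the claimed asymptotic as the boundary term at $y=x$, and then observe that the derivative appearing in the remainder integral can be written as $\frac{h(y)}{\gb\gam y^{\gam-1}}\bigsqpar{(\log h)'(y)-(\gam-1)/y}$, whose bracket is $o(y^{\gam-1})$ by hypothesis and by $1/y=o(y^{\gam-1})$ (using $\gam>0$). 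This makes the remainder at most $\frac{\eps}{\gb\gam}I(x)$, and the self-referential bound is indeed harmless since it still pins the ratio of the two quantities to within $\eps/(\gb\gam)$ of $1$. Both proofs are valid; yours is closer in spirit to the classical Watson-lemma/Laplace-method treatment and avoids the change of variables, at the cost of requiring the integration by parts to be justified (which your growth estimate on $h$ handles), while the paper's argument is slightly more elementary in that it needs only monotonicity and pointwise comparison of integrands rather than differentiating the factor $h(y)/y^{\gam-1}$.
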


\begin{pf}
Suppose first that $\gamma=1$. Then we assume $(\log h(x))'=o(1)$.
Let $\varepsilon(x):=\sup_{y\ge x}|(\log h(y))'|$; thus $\varepsilon
(x)\to0$ as
${x\to\infty}$.
Furthermore, for $t>0$,
\[
\bigl|\log h(x+t)-\log h(x) \bigr| \le\varepsilon(x) t
\]
and thus
\[
e^{-(\beta+\varepsilon(x))t} \le \frac{h(x+t)e^{-\beta(x+t)}}{h(x)e^{-\beta x}} \le e^{-(\beta
-\varepsilon(x))t}.
\]
Integrating we obtain, for $x$ so large that $\varepsilon(x)<\beta$,
\[
\frac{h(x) e^{-\beta x}}{\beta+\varepsilon(x)} \le \int_0^\infty
h(x+t)e^{-\beta(x+t)}\,\mathrm{d}t \le \frac{h(x) e^{-\beta x}}{\beta-\varepsilon(x)} ,
\]
and (\ref{li}) follows when $\gamma=1$.

For a general $\gamma$ we change variable by $y=z^{1/\gamma}$:
\[
\int_x^\infty h(y)e^{-\beta y^\gamma} =\int
_{x^{\gamma}}^\infty h\bigl(x^{1/\gamma}\bigr)
e^{-\beta z} \gamma ^{-1}z^{1/\gamma
-1}\,\mathrm{d}z .
\]
The function $H(z)=\gamma^{-1}h(z^{1/\gamma})z^{1/\gamma-1}$ satisfies
\[
\bigl(\log H(z)\bigr)' =(\log h)'\bigl(z^{1/\gamma}
\bigr) \cdot\gamma^{-1}z^{1/\gamma-1}+\bigl(\gamma ^{-1}-1
\bigr)z^{-1}= o(1),
\]
and thus the case $\gamma=1$ applies and yields
\[
\int_x^\infty h(y)e^{-\beta y^\gamma} = \int
_{x^\gamma}^\infty H(z)e^{-\beta z} \,\mathrm{d}z \sim
\beta^{-1}H\bigl(x^\gamma\bigr)e^{-\beta x^\gamma}\qquad \mbox{as } {x\to
\infty},
\]
which is (\ref{li}).
\end{pf}

\begin{pf*}{Proof of Theorem~\ref{TC2}}
Let $\omega_{d-1}:=2\pi^{d/2}/\Gamma(d/2)$, the surface area of the
unit sphere $\mathbb S^{d-1}$ in $\mathbb R^d$.
By (\ref{f}) and
Lemma~\ref{Li}, with $h(x)=x^{\alpha+d-1}$,
\[
\mathbb{P}\bigl(|X_1|>x\bigr) \sim\int_x^\infty
c r^\alpha e^{-\beta r^\gamma} \omega _{d-1}r^{d-1}
\,\mathrm{d}r \sim c\omega_{d-1}(\beta\gamma)^{-1}x^{\alpha+d-\gamma}e^{-\beta
x^\gamma}.
\]
Hence (\ref{g}) holds with
%
\begin{equation}
\label{g0} g(x)=\beta x^\gamma-(\alpha+d-\gamma)\log x + \log(\beta
\gamma /c\omega_{d-1}).
\end{equation}
We have
%
\begin{eqnarray}
\label{g'0} g'(x)&=&\beta\gamma x^{\gamma-1} -(\alpha+d-
\gamma)x^{-1},
\\
g''(x)&=&\beta\gamma(\gamma-1) x^{\gamma-2} +(
\alpha+d-\gamma)x^{-2}, \label{g''0}
\end{eqnarray}
and (\ref{g'})--(\ref{g''}) are easily verified.

In order to have (\ref{an}), we need,
since $g(x)\sim\beta x^\gamma$ as ${x\to\infty}$ by (\ref{g0}),
$a_n\sim\beta^{-1/\gamma}\log^{1/\gamma}n$; furthermore, (\ref
{bn}) then yields
\[
b_n\sim\frac{1}{g'(a_n)} \sim\frac{1}{\beta\gamma a_n^{\gamma-1}} \sim
\beta^{-1/\gamma}\gamma^{-1}\log^{1/\gamma-1}n.
\]
We thus choose, for simplicity,
%
\begin{equation}
\label{bn0} b_n:= \beta^{-1/\gamma}\gamma^{-1}
\log^{1/\gamma-1}n.
\end{equation}

If $u_n=O(\log\log n)$, then, by a Taylor expansion and
(\ref{g0})--(\ref{g''0}),
\begin{eqnarray*}
&& g \bigl(\beta^{-1/\gamma}\log^{1/\gamma}n +
u_nb_n \bigr)
\\
&&\qquad = g \bigl(\beta^{-1/\gamma}\log^{1/\gamma}n \bigr) +
u_nb_n g' \bigl(\beta^{-1/\gamma}
\log^{1/\gamma}n \bigr) +o(1)
\\
&&\qquad =\log n -(\alpha+d-\gamma)\gamma^{-1}(\log\log n-\log\beta) +\log(
\beta\gamma/c\omega_{d-1})
\\
&&\qquad\quad{} +u_n +o(1).
\end{eqnarray*}
Hence we define
%
\begin{eqnarray}
\label{an0} 
&&a_n := \beta^{-1/\gamma}
\log^{1/\gamma}n + 
b_n \biggl(\frac{\alpha+d-\gamma}{\gamma}\log
\log n - (\alpha+d)\gamma^{-1}\log\beta
\nonumber
\\[-8pt]
\\[-8pt]
\nonumber
&&\hspace*{199pt}{}-\log\gamma+\log(c\omega_{d-1}) \biggr) 
\end{eqnarray}
and find that (\ref{an}) holds. 
Furthermore, by another Taylor expansion,
\[
g'(a_n)=\beta^{1/\gamma}\gamma\log^{1-1/\gamma}n
\cdot \bigl(1+O \bigl(\log^{-1}n+\log\log n\cdot\log^{-1/\gamma
}n
\bigr) \bigr),
\]
and (\ref{bn}) follows easily.
Hence Theorem~\ref{TC1} applies, and (\ref{t1a}) holds.
Moreover, by (\ref{an0}) and (\ref{bn0}),
\begin{eqnarray*}
a_n/b_n &\sim&\gamma\log n,
\\
\log(a_n/b_n) &=& \log\log n+\log\gamma+ o(1),
\\
\log\log(a_n/b_n) &=& \log\log\log n + o(1),
\end{eqnarray*}
and the result follows from (\ref{t1a}) by collecting terms, with
$c'_d=c_d\omega_{d-1}^2$, which yields (\ref{cct}).
\end{pf*}

\section{Further comments}\label{Sfurther}

\subsection{Weibull-type extremes}\label{SSweibull}

The Weibull-type extreme value distribution occurs for random variables that
are bounded above; in our context this means that $|X|$ is bounded, so
$X$ takes values in a bounded set.
(However, there are examples of Gumbel-type in this case too; see
Example~\ref{EGumbelsphere}.)
By scaling we may assume that the upper
endpoint of the support of $|X|$ is 1, so $X$ belongs to the unit ball, but
not always to any smaller ball. The typical Weibull case is
%
\begin{equation}
\label{wei1} \mathbb{P}\bigl(|X|>x\bigr)\sim c(1-x)^\alpha,\qquad x\nearrow1,
\end{equation}
for some $\alpha>0$, in which case
%
\begin{equation}
\label{weiM} \mathbb{P} \bigl(n^{1/\alpha} (1-M_n)>x \bigr)\to
\exp \bigl(-cx^\alpha \bigr),
\end{equation}
which means that $c^{1/\alpha}n^{1/\alpha}(M_n-1)$ converges to the
(negative) Weibull
distribution in (\ref{weibull}).

Mayer and Molchanov \cite{MM07} show that if (\ref{wei1}) holds, then
$M^{(2)}_n$ also has an asymptotic Weibull distribution, with a different
parameter.
More precisely, they show the following;
see also Lao and Mayer \cite{LaoMayer} and Lao \cite{Lao}, which
contain further related
results.

\begin{theorem}[(Mayer and Molchanov \cite{MM07})]\label{Tweibull}
Suppose that $d\ge2$ and that
$X_1,X_2,\ldots$ are i.i.d. $\mathbb R^d$-valued random vectors
with a spherically symmetric distribution
such that (\ref{wei1}) hold for some $\alpha\ge0$ and $c>0$. Then
%
\begin{equation}
\label{weiMM} \mathbb{P} \bigl(n^{4/(d-1+4\alpha)}\bigl(2-M^{(2)}_n
\bigr)>x \bigr) \to \exp \bigl(-c_{d,\alpha}c^2 x^{(d-1+4\alpha)/2}
\bigr),
\end{equation}
with
%
\begin{equation}
c_{d,\alpha}:=\frac{\Gamma(\alpha+1)^2\Gamma((d+1)/2)}{2\Gamma
((d+1+4\alpha)/2)}c'''_d
=\frac{2^{d-3}\Gamma(\alpha+1)^2\Gamma(d/2)}{\sqrt\pi\Gamma
((d+1+4\alpha)/2)}.
\end{equation}
\end{theorem}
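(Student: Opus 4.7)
The plan is to follow the outline of the proof of \refT{T1} but in the bounded-support setting, where the truncation step used in the Gumbel case becomes unnecessary because $|X|\le 1$ almost surely. Fix $x>0$, set $\eps_n\=n^{-4/(d-1+4\ga)}$ (so that $n^2\eps_n^{(d-1+4\ga)/2}\equiv 1$), and let $W_n(x)$ denote the number of pairs $1\le i<j\le n$ with $|X_i-X_j|>2-\eps_n x$. Since $\{\MM_n<2-\eps_n x\}=\{W_n(x)=0\}$, it suffices to prove
\begin{equation*}
W_n(x)\dto \Po\bigpar{\ccw c^2 x^{(d-1+4\ga)/2}},
\end{equation*}
which is exactly \eqref{weiMM}.

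For the first moment, I would compute $\E W_n(x)=\binom{n}{2}\P(|X_1-X_2|>2-\eps_n x)$ by conditioning on the lengths $|X_i|=1-u_i$. From \eqref{wei1}, $U\=1-|X|$ has density $\sim c\ga u^{\ga-1}$ as $u\downto 0$. The cosine identity that underlies \refL{L2} gives, after expansion and using $u_1,u_2=O(\eps_n)$,
\begin{equation*}
|X_1-X_2|>2-\eps_n x \iff 1+\cos\Theta<2(\eps_n x-u_1-u_2)\etto,
\end{equation*}
whose conditional probability is supplied by \refL{Lcos} (the angular lemma applies in any tail regime; cf.\ \refR{RL2}). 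Integrating against the two Weibull densities and rescaling $u_i=\eps_n x\cdot v_i$ reduces everything to the Dirichlet integral
\begin{equation*}
\iint_{v_1+v_2\le 1}v_1^{\ga-1}v_2^{\ga-1}(1-v_1-v_2)^{(d-1)/2}\dd v_1\dd v_2=\frac{\gG(\ga)^2\gG((d+1)/2)}{\gG(2\ga+(d+1)/2)}.
\end{equation*}
Collecting the factors $\ga\gG(\ga)=\gG(\ga+1)$, $2^{(d-3)/2}\cdot 2^{(d-1)/2}=2^{d-2}$, and the angular constant of \refL{Lcos}, the limit reads exactly $\E W_n(x)\to \ccw c^2 x^{(d-1+4\ga)/2}$.

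To close the Poisson approximation I would verify the covariance bound of \refL{Lvar}, namely $n^3\E\bigpar{f_n(X_1,X_2)f_n(X_1,X_3)}=o(1)$. Conditioning on $X_1$ and integrating out $U_2$ via a one-variable Beta integral gives $\E\bigpar{f_n(X_1,X_2)\mid X_1}=O\bigpar{(\eps_n x-U_1)_+^{\ga+(d-1)/2}}$; squaring and integrating against $c\ga U_1^{\ga-1}\dd U_1$ (using $U_1=\eps_n x\cdot w$) yields $\E\bigsqpar{\E(f_n\mid X_1)^2}=O(\eps_n^{3\ga+d-1})$, so that $n^3\cdot O(\eps_n^{3\ga+d-1})=O\bigpar{n^{-(d-1)/(d-1+4\ga)}}=o(1)$ for $d\ge 2$. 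Combined with the first-moment convergence, the Silverman--Brown theorem (invoked at the end of \refS{Spf}) then delivers $W_n(x)\dto\Po(\ccw c^2 x^{(d-1+4\ga)/2})$.

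The main obstacle is simply the bookkeeping of constants in the first-moment computation: combining the angular constant of \refL{Lcos}, the power of $2$ from the cosine expansion, and the Dirichlet Beta identity so that everything coalesces into the precise $\ccw$ stated in the theorem. Beyond that, the argument is a direct and slightly cleaner analogue of Sections \refand{Spf}{Spf2}: the boundedness of $|X|$ removes the need for the auxiliary sequence $s_n$ and the accompanying truncation in \refL{L0}, so no analogue of the $\E W_n\to\infty$ pathology noted in \refR{Rnot} can arise.
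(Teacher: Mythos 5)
Your proposal follows exactly the route that the paper sketches for this theorem: set $a_n=1$, choose the $b_n,r_n$ scaling to match $\eps_n$-units, observe that the truncation sequence $s_n$ and \refL{L0} are superfluous because $|X_i|\le1$ almost surely, and then push through the first- and second-moment computations to apply the Silverman--Brown Poisson approximation. Your accounting of the constant checks out: writing $U_i=1-|X_i|$ with tail $\sim cu^\ga$, the angular probability from \refL{Lcos} becomes $\ccb(\eps_nx-u_1-u_2)_+^{(d-1)/2}$ after the cosine expansion, the rescaling $u_i=\eps_n x v_i$ produces the two-variable Dirichlet integral $\gG(\ga)^2\gG((d+1)/2)/\gG((4\ga+d+1)/2)$, and folding in $\ga\gG(\ga)=\gG(\ga+1)$, $\ccb$ and the factor $\tfrac12$ from $\binom n2$ reproduces $\ccw$. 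Your variance bound $n^3\E\bigsqpar{\E(f_n\mid X_1)^2}=O\bigpar{n^{-(d-1)/(d-1+4\ga)}}$ is also correct and goes to $0$ precisely because $d\ge2$.

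There are two points worth tightening. First, the theorem allows $\ga\ge0$, but your argument is phrased entirely in terms of the density $c\ga u^{\ga-1}$ of $U$, which degenerates when $\ga=0$ (then $\P(|X|=1)=c>0$ and $U$ has an atom at $0$). The paper treats $\ga=0$ as a separate, easier case (reduce to $X$ uniform on the sphere and apply \refL{L2} directly); your write-up should either do the same or explicitly restrict to $\ga>0$ for the continuum computation. Second, even for $\ga>0$ the hypothesis \eqref{wei1} gives only the distribution function of $U$, not a density; your Dirichlet integral should strictly be a Lebesgue--Stieltjes computation, exactly what the paper's analogue of \refL{L4} (see \eqref{weiL4}) is designed to supply. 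With those two adjustments, your argument is the paper's proof.
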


Hence, $n^{4/(d-1+4\alpha)}(M^{(2)}_n-2)$ has, apart from a constant factor,
the (negative) Weibull distribution (\ref{weibull}) with parameter
$(d-1+4\alpha)/2$.

Note that Theorem~\ref{Tweibull} includes the case
$\alpha=0$, that is, when $\mathbb{P}(|X|=1)=c>0$, in particular the case
$|X|=1$ with $X$ uniformly distributed on the unit sphere.
(The latter case was earlier shown by Appel and Russo \cite{AR06}.)
In the case
$\alpha=0$, (\ref{weiM}) does not make sense; the asymptotic
distribution of
$M_n$ is degenerate, since $\mathbb{P}(M_n=1)\to1$.

Theorem~\ref{Tweibull} can easily be proved by the method in
Section~\ref{Spf},
taking $a_n:=1$, $b_n:=c^{-1/\alpha}n^{-1/\alpha}$ (with $b_n:=1$
when $\alpha=0$),
$r_n:=x b_n^{-1}n^{-4/(d-1+4\alpha)}$ and $s_n=0$.
(We take $s_n=0$ since no truncation is needed in this case; indeed,
$W_n'=W_n$; cf. Remark~\ref{Rnot}.)
We omit the details. (The authors of \cite{MM07} and \cite{AR06}
also use Poisson approximation, but the details are
different.) 

%

\begin{remark}
Since the normalizing factors in (\ref{weiM}) and (\ref{weiMM}) have
different powers of $n$, $2-M^{(2)}_n$ is asymptotically much larger than
$1-M_n$, and thus $2M_n-M^{(2)}_n$ has the same asymptotic distribution as
$2-M^{(2)}_n$; see (\ref{weiMM}), and cf. Remark~\ref{Rdiff} for the
Gumbel case.
\end{remark}

We have here for simplicity
considered only the standard case when (\ref{wei1}) holds,
and leave extensions to more general distributions with $M_n$ asymptotically
Weibull to the reader.

\subsection{Fr\'echet-type extremes}\label{SSfrechet}

The Fr\'echet-type extreme value distribution occurs for $|X|$ if
(and only
if, see \cite{LLR}, Theorem~1.6.2 and Corollary~1.6.3) there exists a
sequence $\gamma_n\to\infty$ such that
%
\begin{equation}
\label{fre1} \mathbb{P}\bigl(|X|>x\gamma_n\bigr)\sim\frac{1}n
x^{-\alpha}
\end{equation}
for every (fixed) $x>0$; then
%
\begin{equation}
\label{freM} \gamma_n^{-1}M_n \stackrel{
{d}} {\longrightarrow}\tilde V,
\end{equation}
where $\tilde V$ has the Fr\'echet distribution (\ref{frechet}).
The typical case is a power-law tail
%
\begin{equation}
\label{fre2} \mathbb{P}\bigl(|X|>x\bigr)\sim c x^{-\alpha}\qquad \mbox{as ${x\to
\infty}$};
\end{equation}
in this case, (\ref{fre1}) and (\ref{freM}) hold with $\gamma
_n=(cn)^{1/\alpha}$.
We have the following result, independently found by Henze and Lao
\cite{HenzeLao}.
Let again $\omega_{d-1}:=2\pi^{d/2}/\Gamma(d/2)$, the surface area
of the
unit sphere in $\mathbb R^d$.

\begin{theorem}[(Henze and Lao \cite{HenzeLao})]\label{Tfrechet}
Suppose that $d\ge2$ and that
$X_1,X_2,\ldots$ are i.i.d. $\mathbb R^d$-valued random vectors
with a spherically symmetric distribution
such that (\ref{fre1}) hold for some $\gamma_n\to\infty$. Then
%
\begin{equation}
\label{freMM} \gamma_n^{-1}M^{(2)}_n\stackrel{{d}} {\longrightarrow }Z_\alpha
\end{equation}
for some random variable $Z_\alpha$, which can be described as the maximum
distance $\max_{i,j}|\xi_i-\xi_j|$
between the points in a Poisson point process $\Xi=\{\xi_i\}$ on
$\mathbb R^d\setminus\{0\}$ with intensity
$\alpha\omega_{d-1}^{-1}|\mathbf{x}|^{-\alpha-d}$.
\end{theorem}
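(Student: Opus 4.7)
The natural approach is a point-process convergence argument followed by continuous mapping. Let $N_n\=\sumin \delta_{\gam_n\qw X_i}$, viewed as a random point measure on $\bbR^d\setminus\set0$. First I would establish that $N_n\dto\Xi$ in the vague topology on $\bbR^d\setminus\set0$, where $\Xi$ is the Poisson point process with intensity $\ga\go_d\qw|\xx|^{-\ga-d}$: by spherical symmetry we may write $X=|X|\Theta$ with $\Theta$ uniform on $\bbS^{d-1}$ independent of $|X|$, so \eqref{fre1} gives vague convergence of the rescaled radial law $n\P(|X|/\gam_n\in\cdot)$ on $(0,\infty)$ to $\ga r^{-\ga-1}\dd r$; combining with the uniform angular distribution (and the polar Jacobian) shows that for every relatively compact Borel set $A\subset\bbR^d\setminus\set0$,
\[
n\,\P(\gam_n\qw X\in A)\to\int_A\ga\go_d\qw|\yy|^{-\ga-d}\dd\yy.
\]
Standard extreme-value point-process theory then promotes this to $N_n\dto\Xi$ in the vague topology.

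Next, I would introduce the truncations $\MM_n^{R,L}\=\max\bigcpar{|X_i-X_j|:R\gam_n<|X_i|,|X_j|\le L\gam_n}$ and the analogue $Z_\ga^{R,L}$ read off from $\Xi$. On the compact annulus $\set{R<|\xx|\le L}$ the measure $\Xi$ has a.s.\ only finitely many atoms, so the restriction of the vague convergence combined with the continuous mapping theorem (applied to the functional ``maximum pairwise distance'' on finite point configurations, which is continuous in the weak topology on finite point measures) yields $\gam_n\qw\MM_n^{R,L}\dto Z_\ga^{R,L}$. Moreover $Z_\ga^{R,L}\upto Z_\ga$ a.s.\ as $R\downto0$ and $L\to\infty$, because the maximizing pair of $\Xi$ is a.s.\ contained in some annulus bounded away from $0$ and $\infty$ (there are a.s.\ only finitely many atoms with norm exceeding any fixed positive number, and the max pair is selected among them).

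The main obstacle is to remove the truncation at the pre-limit level, i.e.\ to show that $\MM_n=\MM_n^{R,L}$ with probability $\to 1$ as $R\downto0$ and $L\to\infty$ after $\ntoo$. The large-norm cutoff is controlled by $\P(M_n>L\gam_n)\to 1-\exp(-L^{-\ga})\to 0$ as $L\to\infty$. For the small-norm cutoff, any pair $(X_i,X_j)$ with $\min(|X_i|,|X_j|)\le R\gam_n$ satisfies $|X_i-X_j|\le R\gam_n+M_n$, so it suffices to verify that asymptotically $\gam_n\qw\MM_n^{R,L}>R+\gam_n\qw M_n$; for this the crucial almost-sure inequality is $Z_\ga>\tV$, where $\tV\=\max_{\xi\in\Xi}|\xi|$. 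This holds because $\Xi$ has divergent intensity in every open half-space through the origin: conditional on the a.s.\ unique maximum-norm atom $\xi^*$, the open half-space $\set{\xx:\innprod{\xx,\xi^*}<0}$ contains a.s.\ at least one (in fact infinitely many) atoms $\xi'$ of $\Xi$, for which $|\xi^*-\xi'|^2=|\xi^*|^2+|\xi'|^2-2\innprod{\xi^*,\xi'}>|\xi^*|^2$ and hence $|\xi^*-\xi'|>\tV$. Combining this with the joint convergence $(\gam_n\qw\MM_n^{R,L},\gam_n\qw M_n)\dto(Z_\ga^{R,L},\tV)$ forces $\MM_n=\MM_n^{R,L}$ with high probability for $R$ small and $L$ large, and a standard diagonal argument letting first $\ntoo$ and then $R\downto0$, $L\to\infty$ yields $\gam_n\qw\MM_n\dto Z_\ga$.
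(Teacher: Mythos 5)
Your proof follows the same point-process route that the paper's own proof only sketches: vague convergence of $\sum_i\delta_{\gamma_n^{-1}X_i}$ to the Poisson process $\Xi$ on $\bbR^d\setminus\{0\}$, followed by a continuous-mapping argument. What you add, correctly, is precisely the detail the paper omits — the truncation to a compact annulus and the a.s.\ inequality $Z_\alpha>\tilde V$ (justified by the infinite intensity of $\Xi$ in the open half-space opposite to the maximum-norm atom), which simultaneously shows the supremum is attained among finitely many atoms and lets you remove the inner cutoff uniformly in $n$.
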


\begin{pf*}{Sketch of proof}
It is easy to see that the scaled set of points $\{\gamma_n^{-1}
X_i\dvtx\break  1\le i\le n\}$, regarded as a point process on $\mathbb
R^d\setminus0$, converges in
distribution to the Poisson process $\Xi$. It then follows that the
maximum interpoint distance converges. We omit the details.
See, for example,
Kallenberg \cite{Kallenberg:RM} or \cite{Kallenberg} for details on
point processes, or
Janson \cite{SJ136}, Section~4, for a brief summary.
\end{pf*}

Note that the point process $\Xi$ has infinite intensity, and thus a.s. an
infinite number of points, clustering at 0, but a.s. only a finite number
of points $|\xi|>\varepsilon$ for any $\varepsilon>0$. (This is the
reason for
regarding the
point processes on $\mathbb R^d\setminus0$ only,
since we want the point processes to
be locally finite.)

We leave it as an open problem to find an explicit description of the
limit distribution, that is, the distribution of $Z_\alpha$. We do not believe
that it is Fr\'echet, so $M^{(2)}_n$ and $M_n$ will (presumably)
not have the same type of
asymptotic distribution in the Fr\'echet case, unlike the Gumbel and
Weibull cases treated above.

One reason for the more complicated limit behavior in the Fr\'echet
case is
that the Poisson approximation argument in Section~\ref{Spf} fails. If
we define
$W_n'$ as there, with a suitable threshold and a suitable truncation
(avoiding small $|X_i|$ this time), we can achieve $\mathbb
{E}f_n(X_1,X_2)\sim C
n^{-2}$ as in Lemma~\ref{LE}, for a constant $C>0$, but then $\mathbb{E}
f_n(X_1,X_2)f_n(X_1,X_3)$ will be of order $n^{-3}$ and there is no analogue
of Lemma~\ref{Lvar}; this ought to mean that $W_n$ does not have an asymptotic
Poisson distribution. In other words, the problem is that there is too much
dependence between pairs with a large distance.

Furthermore, it can be seen from Theorem~\ref{Tfrechet} that there is
a positive
limiting probability that the maximum distance $M^{(2)}_n$ is attained between
the two vectors $X_{(1)}$ and $X_{(2)}$ with largest length, but it can also
be attained (with probability bounded away from 0)
by any other pair $X_{(k)}$ and $X_{(l)}$ with given $1\le k<l$.
This is related to the preceding comment, and may thus also be
a reason for the more complicated behavior of $M^{(2)}_n$ in the Fr\'echet
case.
(It shows also that there is an asymptotic dependence between
$M^{(2)}_n$ and
$M_n$ which does not exist in the Gumbel and Weibull cases.)
Moreover,
it follows also
that, again unlike the Gumbel and Weibull cases,
the angle between the maximizing vectors $X_i$
and $X_j$ is not necessarily close to $\pi$
(it can be any angle $>\pi/3$), so it is not enough to use
asymptotic estimates as Lemma~\ref{Lcos}.

\subsection{The case $d=1$}\label{SSd=1}
The theorem above supposes $d>1$, for example, because we need
$r_n\to\infty$.
In the case $d=1$, there is a similar result, which is much simpler, but
somewhat different; for comparison we give this result too.
(For simplicity we continue to consider symmetric variables.)

\begin{theorem}\label{T1d=1}
Suppose that $X_1,X_2,\ldots$ are i.i.d. symmetric real-valued
random variables
such that for some sequences $a_n$ and $b_n$ of positive numbers,
(\ref{a})~holds
as ${n\to\infty}$,
for any fixed real $t$.
Then
%
\begin{equation}
\label{t1a1} \frac{M^{(2)}_n-2a_n}{b_n} +2\log2 \stackrel{{d}} {
\longrightarrow}V_++V_-,
\end{equation}
where $V_1,V_2$ are two independent random variables
with the Gumbel distribution $\mathbb{P}(V_\pm\le t)=e^{-e^{-t}}$.
\end{theorem}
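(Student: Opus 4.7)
The plan is to reduce \refT{T1d=1} to the classical fact that, for an \iid{} sample from a symmetric real distribution in the Gumbel domain of attraction, the upper and lower extremes are asymptotically independent. No truncation, no geometric estimates, and no uniformity in $t$ are required; only \eqref{a} for each fixed $t$ is used.

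The first step is to exploit the fact that in dimension $1$ the triangle inequality degenerates into an equality. Writing $M_n^+\=\max_{i\le n}X_i$ and $M_n^-\=-\min_{j\le n}X_j$, one has
\begin{equation*}
\MM_n=\max_{i,j}(X_i-X_j)=M_n^++M_n^-.
\end{equation*}
Thus it suffices to prove joint convergence
\begin{equation*}
\Bigpar{\frac{M_n^+-a_n}{b_n}+\log 2,\;\frac{M_n^--a_n}{b_n}+\log 2}\dto(V_+,V_-)
\end{equation*}
with $V_+,V_-$ independent Gumbel; \eqref{t1a1} then follows from the continuous mapping theorem applied to addition.

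The second step is to rewrite the hypothesis in one-sided form. By symmetry of $X$, $\P(X>x)=\P(-X>x)=\tfrac12\P(|X|>x)$ for $x>0$, so \eqref{a} yields
\begin{equation*}
\P(X>a_n+tb_n)=\P(-X>a_n+tb_n)=\frac{1+o(1)}{2n}e^{-t}
\end{equation*}
for each fixed $t\in\bbR$. Now fix $t_1,t_2\in\bbR$ and let $N_n^+$, $N_n^-$ count the indices $i\le n$ with $X_i>a_n+t_1b_n$ and with $-X_i>a_n+t_2b_n$, respectively. Because $a_n\to\infty$, for large $n$ the two events are disjoint at the level of each individual $X_i$, so $(N_n^+,N_n^-,n-N_n^+-N_n^-)$ is multinomial with two rare cells of probabilities $p_n^\pm\sim e^{-t_\pm}/(2n)$. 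A standard Poisson approximation gives
\begin{equation*}
(N_n^+,N_n^-)\dto\bigpar{\Po(\tfrac12 e^{-t_1}),\,\Po(\tfrac12 e^{-t_2})}
\end{equation*}
with independent limit components.

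The third and final step is to read off the joint CDF of the extremes: $\{M_n^+\le a_n+t_1b_n,\,M_n^-\le a_n+t_2b_n\}=\{N_n^+=0,N_n^-=0\}$, whose probability converges to $\exp(-\tfrac12 e^{-t_1})\exp(-\tfrac12 e^{-t_2})$. Reparametrising by $s_\pm=t_\pm+\log 2$ makes this equal to $\exp(-e^{-s_1})\exp(-e^{-s_2})$, i.e.\ the joint CDF of two independent Gumbels, establishing the displayed joint convergence above. Adding the two coordinates delivers \eqref{t1a1}. The only conceptually interesting ingredient is the asymptotic independence of $M_n^+$ and $M_n^-$, but this is essentially automatic from the per-index disjointness of the two exceedance events, so there is no real obstacle; the whole argument is markedly easier than the multidimensional case of \refT{T1} and unlike the \Frechet{} discussion in \refSS{SSfrechet} there is no troublesome dependence between extremes to confront.
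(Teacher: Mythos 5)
Your proof is correct, and it reaches the conclusion by a genuinely different route than the paper. The paper's proof conditions on the random split of indices into positive and negative signs (assigning a random sign to any zero value), observes that given $N_+$ and $N_-$ the two one-sided maxima are conditionally independent with $M_n^+\eqd M_{N_+}$ and $M_n^-\eqd -M_{N_-}$, and then invokes a random-sample-size version of the Gumbel limit: for any random $N$ with $N/n\pto\tfrac12$, one has $(M_N-a_n)/b_n+\log 2\dto V$. You instead skip the conditioning entirely and work with exceedance counts $N_n^\pm$ of high thresholds, noting that for large $n$ the two per-index exceedance events are disjoint (since $a_n\to\infty$, forced by \eqref{a} and $b_n=o(a_n)$), so $(N_n^+,N_n^-)$ is a multinomial vector with two rare cells and converges to a pair of independent Poissons; reading off $\P(N_n^+=0,N_n^-=0)$ gives the product Gumbel CDF directly. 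Both approaches are sound. Yours is slightly more self-contained and elementary — it avoids the random-index extreme-value lemma and makes the asymptotic independence of $M_n^+$ and $M_n^-$ transparent via disjointness of exceedances — while the paper's conditional-independence argument is shorter to state once that lemma is granted. One small point to make explicit in your write-up: the identity $\P(X>x)=\tfrac12\P(|X|>x)$ is exact for $x>0$ only when $\P(X=x)=0$; in general $\P(|X|>x)=\P(X>x)+\P(X<-x)=2\P(X>x)$ still holds by symmetry, so the identity is fine, but worth stating as such rather than as a consequence of `splitting' $\P(|X|>x)$.
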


\begin{pf}
When $d=1$,
%
\begin{equation}
\label{nasten} M^{(2)}_n=M^+_n-M^-_n,
\end{equation}
where
$M^+_n:=\max_{i\le n} X_i$ and
$M^-_n:=\min_{i\le n} X_i$.

There are about $n/2$ positive and $n/2$ negative $X_i$.
More precisely, denoting these numbers by $N_+$ and $N_-=n-N_+$,
where we assign a random sign also to any value that is 0, we have
$N_+,N_-\sim\operatorname{Bi}(n,1/2)$.
Conditioned on $N_+$ and $N_-$, and assuming that both are nonzero,
$M^+_n$ and $M^-_n$ are independent, with $M^+_n\stackrel{
{d}}{=}M_{N_+}$ and
$M^-_n\stackrel{{d}}{=}-M_{N_-}$.
Moreover, if we assume (\ref{a}) for every fixed $t$, then it is easy
to see that for any random $N$ with $N/n\stackrel{
{p}}{\longrightarrow}1/2$ as ${n\to\infty}$, we have
%
\begin{equation}
\frac{M_N-a_n}{b_n}+\log2\stackrel{{d}} {\longrightarrow}V;
\end{equation}
cf. (\ref{max2}). Consequently,
%
\begin{equation}
\label{vilan} \frac{\pm M^\pm_n-a_n}{b_n}+\log2\stackrel{ {d}} {
\longrightarrow}V_\pm,
\end{equation}
where $V_\pm$ are two random variable{s} with the same Gumbel distribution;
moreover, it is easy to see that this holds jointly with $V_+$ and $V_-$
independently.
The result follows from (\ref{nasten}) and (\ref{vilan}).
\end{pf}

Comparing Theorem~\ref{T1d=1} to Theorem~\ref{T1}, we see that
first of all the limit distribution is different.
Furthermore, the term $\frac{d-1}2\log(a_n/b_n)$ in (\ref{t1a}) disappears,
which is expected, but also the
term $\log\log(a_n/b_n)$ disappears, and the constant term is different,
with $-\log c_d$ replaced by $2\log2$. [$c_d$ in (\ref{cd}) would be
0 for
$d=1$, which does not make sense in (\ref{t1a}).]
A reason for the different behavior is that for $d=1$, there is no issue
with the angles, and $M^{(2)}$ is the sum of two extreme value statistics
($M_n^+$ and $-M_n^-$ in the proof above).

Similarly, in the special case in Theorem~\ref{TC2}, we obtain for $d=1$
from (\ref{an0}) and
(\ref{bn0}) (which hold also for $d=1$ by the proof above) the following,
where the limit distribution again is different; furthermore, the
$\log\log\log n$ term disappears, and the constant term is slightly
different.

\begin{theorem}
\label{TC2d=1}
Suppose that $X_1,X_2,\ldots$ are i.i.d. symmetric real-valued
random variables
with a density function $f(x)$ such
that, as $|x|\to\infty$,
%
\begin{equation}
f(x)\sim c |x|^\alpha e^{-\beta|x|^\gamma}
\end{equation}
for some $c,\beta,\gamma>0$ and $\alpha\in\mathbb R$.
Then
\begin{eqnarray*}
&&\bigl(\beta^{1/\gamma}\gamma\log^{1-1/\gamma}n \bigr)\cdot
M^{(2)}_n - \biggl( 2\gamma\log n + \biggl(2
\frac{\alpha+1}{\gamma}-2 \biggr)\log\log n
\\
&&\hspace*{163pt}\quad{}+\log \bigl(\beta^{-2(\alpha+1)/\gamma}\gamma^{-2}c^2 \bigr)
\biggr) \\
&&\qquad\stackrel{{d}} {\longrightarrow}V_++V_-,
\end{eqnarray*}
where $V_\pm$ are independent and have the Gumbel distribution.
\qed
\end{theorem}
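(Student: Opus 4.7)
The plan is to follow the strategy used in the proof of \refT{TC2}, but to invoke \refT{T1d=1} at the end in place of the chain through \refT{TC1} and \refT{T1}. As noted just before the statement, the formulas \eqref{an0} and \eqref{bn0} for $a_n$ and $b_n$ carry over verbatim to $d=1$ provided we read $\godi$ as $2$ (the ``surface area'' of $S^0$). The tasks are therefore (i) compute the tail $\P(|X|>x)$, (ii) verify that \eqref{a} holds for every fixed real $t$, and (iii) substitute into \eqref{t1a1} and collect terms.

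For (i), symmetry gives $\P(|X|>x)=2\int_x^\infty f(y)\dd y$, and an application of \refL{Li} with $h(y)=y^\ga$ yields
\begin{equation*}
\P(|X|>x)\sim 2c(\gb\gam)\qw x^{\ga+1-\gam}e^{-\gb x^\gam},
\end{equation*}
so that \eqref{g} holds with
\begin{equation*}
g(x)=\gb x^\gam-(\ga+1-\gam)\log x+\log\bigpar{\gb\gam/(2c)}.
\end{equation*}
This is \eqref{g0} with $d=1$ and $\godi$ replaced by $2$. With $b_n$ as in \eqref{bn0} and
\begin{equation*}
a_n\=\gb\gggw\log\gggq n+b_n\Bigpar{\tfrac{\ga+1-\gam}{\gam}\log\log n-(\ga+1)\gam\qw\log\gb-\log\gam+\log(2c)},
\end{equation*}
the Taylor expansion used in the proof of \refT{TC1} --- which, for a fixed $t$, collapses to the elementary computation $g(a_n+tb_n)=g(a_n)+tb_ng'(a_n)+o(1)=\log n+t+o(1)$ --- gives \eqref{a} for every fixed real $t$. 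This is all that \refT{T1d=1} requires, so (ii) is done.

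For (iii), \refT{T1d=1} yields $(\MM_n-2a_n)/b_n+2\log 2\dto V_++V_-$. Since $1/b_n\sim\gb\gggq\gam\log^{1-1/\gam}n$, the leading term $2\gb\gggw\log\gggq n$ in $a_n$ contributes $2\gam\log n$ after multiplication by $1/b_n$; the remaining $b_nC_n$ term (where $C_n$ is the bracket in the displayed formula for $a_n$) contributes $2C_n$, which splits into the $\log\log n$ coefficient $2(\ga+1)/\gam-2$ and the constants $-2(\ga+1)\gam\qw\log\gb-2\log\gam+2\log(2c)$. Combined with the additive $+2\log 2$ from \eqref{t1a1}, the constants collapse via $2\log(2c)-2\log 2=\log c^2$ to $\log\bigpar{c^2\gb^{-2(\ga+1)/\gam}\gam\qww}$, yielding the stated expression. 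No step presents a genuine obstacle; this is essentially a bookkeeping check that the $d=1$ analogues of the $d\ge2$ coefficients assemble into the claimed formula, with the only substantive input beyond \refT{T1d=1} being the tail asymptotics supplied by \refL{Li}.
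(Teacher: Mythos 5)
Your proposal is correct and follows exactly the route the paper indicates (the paper states the result with \verb|\nopf| and a remark that \eqref{an0}--\eqref{bn0} carry over to $d=1$, leaving the substitution into \refT{T1d=1} to the reader). The tail computation with $\godi=\omega_0=2$, the resulting $g$, the choice of $a_n,b_n$, and the bookkeeping of constants all check out, including the cancellation $2\log(2c)-2\log 2=\log c^2$ that produces the stated constant $\log\bigpar{\gb^{-2(\ga+1)/\gam}\gam^{-2}c^2}$.
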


Typical examples are given by $f(x)=(2\pi)^{-1/2}e^{-x^2/2}$ and
$f(x)=\frac{1}2e^{-|x|}$; we leave the details to the reader.

The argument above applies also to the Weibull and Fr\'echet cases when
$d=1$; we omit the details.
(Furthermore, Theorem~\ref{Tfrechet} holds also for $d=1$.)

\subsection{Nonsymmetric distributions}
We have assumed that the distribution of $X$ is spherically symmetric.
What happens if we relax that condition? Consider, for example, the
case of a
normal distribution with a nonisotropic covariance matrix, for
example, with
a simple largest eigenvalue so that there is a unique direction where the
variance is largest. Will the asymptotic distribution of $M^{(2)}_n$
then be
governed mainly by the component in that direction only, so that there
is a
limit law similar to the case $d=1$, or will the result still be
similar to
the theorems above for the spherically symmetric case, or is the result
somewhere in between? We leave this as an open problem.

For the case of points distributed inside a bounded set,
Appel, Najim and Russo~\cite{ANR02}, Mayer and Molchanov \cite{MM07},
Lao and Mayer \cite{LaoMayer} and Lao \cite{Lao}
have results also in the nonsymmetric case.
As an example, consider points uniformly distributed inside an ellips with
major axis 1 and minor axis $b<1$. The maximum distance is obviously
attained by some pair of points close to the endpoints of the major axis,
and it can be shown, by arguments similar to the proof of Theorem~\ref
{Tfrechet},
that
$n^{2/3}(2-M^{(2)}_n)\stackrel{{d}}{\longrightarrow}Z$, where
$Z$ can be described as
the distribution of
$\pi^{2/3}\min_{i,j} (x_i'+x_j''-b^2(y_i'-y_j'')^2/4 )$,
with $\{(x_i',y_i')\}$ and $\{(x_j'',y_j'')\}$,
two independent Poisson
processes with intensity 1 in the parabola $\{(x,y)\dvtx y^2\le2x\}$.
[If the endpoints of the major axis are $(\pm1,0)$, we represent
points close
to them as $(1-x',by')$ and $(-1+x'',by'')$, and note that the distance
$ |(1-x',by')-(-1+x'',by'') |\approx2-x'-x''+b^2(y'-y'')^2/4$;
we omit
the details.] We do not know any explicit description of this limit
distribution.
It seems likely that limits of similar types arise
also in other cases where
$\max|X_i|$ is attained in a single direction,
for example, a 3-dimensional ellipsoid with semiaxes $a>b\ge c $,
while we believe that there is a Weibull limit similar to Theorem~\ref
{Tweibull} if
$a=b>c$, so that there is rotational symmetry around the shortest axis.

\subsection{Other norms}
We have considered here only the Euclidean distance. It seems to be an open
problem to find similar results for other distances, for example, the
$\ell^1$-norm or the $\ell^\infty$-norm in $\mathbb R^d$.

\section*{Acknowledgments}
We wish to thank Professor Norbert Henze for bringing this problem to the
attention of one of us and two anonymous referees for very helpful comments.

This paper was largely written on the occasion of SRJ's visit to Uppsala
in October 2012
to receive an honorary doctorate from
the Swedish University of Agricultural Sciences.





%




\printaddresses
\end{document}